\tikzset{snake it/.style={decorate, decoration=snake}}
\pgfplotsset{compat=1.18}
\newcommand*{\rom}[1]{\expandafter\@slowromancap\romannumeral #1@}
\numberwithin{equation}{section}
\theoremstyle{plain}
\newtheorem{theorem}{Theorem}
\numberwithin{theorem}{section}
\theoremstyle{definition}
\newtheorem{definition}[theorem]{Definition}
\theoremstyle{remark}
\newtheorem{remark}[theorem]{Remark}
\newtheorem{example}[theorem]{Example}
\theoremstyle{remark}
\newtheorem{discussion}[theorem]{Discussion}
\theoremstyle{remark}
\newcommand{\smo}{\setminus \mathbf{0}}
\newcommand{\norm}[1]{\left\lVert#1\right\rVert}      
\newcommand{\abs}[1]{\left|#1\right|}                 
\newcommand{\paren}[1]{\left(#1\right)}               
\newcommand{\sparen}[1]{\left\{#1\right\}}      
\newcommand{\dd}{\mathrm{d}}  
\newcommand{\dalpha}{\mathbf{\alpha}}
\newcommand{\supp}{\operatorname{supp}} 
\newcommand{\Cc}{\mathcal{C}}
\newcommand{\Dc}{\mathcal{D}}
\newcommand{\Ec}{\mathcal{E}}
\newcommand{\Fc}{\mathcal{F}}
\newcommand{\Rc}{\mathcal{R}}
\newcommand{\Sc}{\mathcal{S}}
\newcommand{\WF}{\mathrm{WF}}                         
\newcommand{\wf}{\mathrm{WF}}                         
\newcommand{\partyf}[2]{\frac{\partial #2}{\partial y_{#1}}}
\newcommand{\vv}{{\mathbf{v}}}
\newcommand{\vw}{{\mathbf{w}}}
\newcommand{\bpm}{\begin{pmatrix}}
\newcommand{\epm}{\end{pmatrix}}
\newcommand{\xn}{x_n}
\newcommand{\yn}{y_n}
\newcommand{\vx}{{\mathbf{x}}}
\newcommand{\vxo}{\mathbf{x}_0}
\newcommand{\vxp}{\vx''}
\newcommand{\vyp}{\vy''}
\newcommand{\vy}{{\mathbf{y}}}
\newcommand{\vs}{\mathbf{s}}
\newcommand{\vxi}{{\boldsymbol{\xi}}}
\newcommand{\vxio}{{\boldsymbol{\xi}_0}}
\newcommand{\vsig}{{\boldsymbol{\sigma}}} 
\newcommand{\rr}{{{\mathbb R}}}
\newcommand{\rtwo}{{{\mathbb R}^2}}
\newcommand{\rthree}{{{\mathbb R}^3}}
\newcommand{\rn}{{{\mathbb R}^n}}
\newcommand{\rnm}{{{\mathbb R}^{n-1}}}
\newcommand{\drn}{\dot{\mathbb{R}^n}}
\newcommand{\st}{\hskip 0.3mm : \hskip 0.3mm}
\newcommand{\be}{\begin{equation}}
\newcommand{\bea}{\begin{eqnarray}}
\newcommand{\eea}{\end{eqnarray}}
\newcommand{\bean}{\begin{eqnarray*}}
\newcommand{\eean}{\end{eqnarray*}}
\newcommand{\bel}[1]{\begin{equation}\label{#1}}
\newcommand{\ee}{\end{equation}}
\newcommand{\eel}[1]{{\label{#1}\end{equation}}}
\newcommand{\intt}{{\operatorname{int}}}
\newcommand{\bd}{{\operatorname{bd}}}
\DeclareMathOperator{\sinc}{sinc}
\newcommand{\nul}{\operatorname{null}}
\newcommand{\spann}{\operatorname{span}}
\title[short]{Surface of revolution Radon transforms with centers on generalized surfaces in $\mathbb{R}^n$\\{\footnotesize\ddmmyyyydate\today~\currenttime}}
\author{James W. Webber\textsuperscript{$\dagger$}}
\author{Sean Holman\textsuperscript{$\ddagger$}}
\author{Eric Todd Quinto*}
\address[James W. Webber (corresponding author)]{Department of Oncology and Gynecology, Brigham and Women's Hospital, 221 Longwood Ave., Boston, MA 02115}
\address[Sean Holman]{Department of Mathematics, The University of Manchester, Alan Turing Building, Oxford Road, Manchester M13 9PY}
\address[Eric Todd Quinto]{Department of Mathematics, Tufts
University, 177 College Ave, Medford, MA 02155}
\email[A1,A2]{jwebber5@bwh.harvard.edu\textsuperscript{$\dagger$}, sean.holman@manchester.ac.uk\textsuperscript{$\ddagger$}}
\email[A3]{todd.quinto@tufts.edu*}
\providecommand{\keywords}[1]
{
  \small	
  \textbf{\textit{Keywords---}} #1
}
\begin{document}

\begin{abstract}
We present a novel analysis of a Radon transform, $R$, which maps an $L^2$ function of compact support to its integrals over smooth surfaces of revolution with centers on an embedded hypersurface in $\mathbb{R}^n$. Using microlocal analysis, we derive necessary and sufficient conditions relating to $R$ for the Bolker condition to hold, which has implications regarding the existence and location of image artifacts. We present a general inversion framework based on Volterra equation theory and known results on the spherical Radon transform, and we prove injectivity results for $R$. Several example applications of our theory are discussed in the context of, e.g., Compton Scatter Tomography (CST) and Ultrasound Reflection Tomography (URT). In addition, using the proposed inversion framework, we validate our microlocal theory via simulation, and present simulated image reconstructions of image phantoms with added noise.

\end{abstract}
\maketitle

\keywords{{\it{\textbf{Keywords}}}} - surfaces of revolution, generalized Radon transforms, inversion methods, microlocal analysis


\section{Introduction}
In this paper, we analyze a novel generalized Radon transform, $R$, which gives the integrals of a compactly supported $L^2$ function over surfaces of revolution with centers on a smooth hypersurface in $\mathbb{R}^n$. We investigate the inversion stability of $R$ using microlocal analysis, and show that $R$ is injective using known theory on the spherical Radon transform.

There is now a wealth of literature covering the inversion and stability properties of surface of revolution Radon transforms \cite{andersson1988determination,kunyansky2007explicit,rubin2002inversion,haltmeier2017spherical,Web,SU:SAR2013,Caday:SAR,Rubin-sonar, Nguyen-Pham, Terzioglu-KK-cone, GouiaAmbartsoumian2014,  WHQ, Q2006:supp, rigaud20183d, rigaud20213d, cebeiro2021three,  webber2021microlocal, webber2023cylindrical, grathwohl2020imaging}, which have applications in, e.g., CST \cite{rigaud20183d}, Emission CST (ECST) \cite{Terzioglu-KK-cone}, seismic imaging \cite{grathwohl2020imaging}, Synthetic Aperture Radar (SAR) \cite{SU:SAR2013}, and URT \cite{WHQ}. 

In \cite{webber2021microlocal}, the authors consider the microlocal
properties of a Radon transform, denoted $R$ (using the notation of
\cite{webber2021microlocal}), which defines the integrals of an $L^2$
function over the surfaces of revolution of continuous curves defined by a
function, $q$. The surfaces of revolution considered have centers on a flat
plane, and the axes of revolution are perpendicular to the plane of
centers. In this case, $R$ is shown to be an elliptic Fourier Integral
Operator (FIO) under certain conditions on $q$. Further, the authors
provide necessary and sufficient conditions on $q$ for $R$ to satisfy the Bolker condition, which has important implications regarding image artifacts. Simulated image reconstructions are shown to verify the microlocal theory in specific examples where the Bolker condition does, and does not hold.

In \cite{rigaud20183d}, the authors present contour reconstruction methods for FIO which define the integrals of a function over lemons, i.e., the surfaces of revolution of circular arcs. A lemon surface has two points of self-intersection. In the geometry of \cite{rigaud20183d}, one of the points of self-intersection of the lemon is fixed on the surface of a sphere, and the other moves around on the same sphere. Using microlocal analysis and filtered backprojection ideas, the authors show that the image contours can be recovered from lemon integral data. Simulated image reconstructions of image phantoms are presented with varying levels of added noise, and Total Variation (TV) denoising is applied to help combat noise.

In \cite{terzioglu2019some}, the author derives inversion formulae for a cone Radon transform, $C^k$, in $n$ dimensions, and investigates the microlocal properties of $C^k$. The cones considered have fixed central axis direction. All cones opening angles and vertex positions are used. The authors prove that $C^k$ and its dual are FIOs, and show that the normal operator of $C^k$ is pseudodifferential operator, which has important implication regarding the stability of inversion of $C^k$. The range of $C^k$ is also explored, and the authors present a differential equation which $C^k$ satisfies, which takes steps towards characterizing the range of $C^k$. The same author also provides explicit reconstruction formulae for cone transforms with cone points in fairly general positions in \cite{terzioglu2023recovering}.

In \cite{kunyansky2007explicit}, the authors present explicit inversion
formulae for a Radon transform which is defined by the integrals of an $n$-dimensional function over ($n-1$)-dimensional spheres with centers on the boundary of a closed ball, $B$, which has radius $R$, center zero. The target function, $f$, is assumed to be supported on $B$. The derivation uses certain properties of the Helmholtz equation in $\mathbb{R}^n$, and an integral representation for $f$ which involves a convolution of $f$ with  a solution to the Helmholtz equation. This leads to an inversion formula of filtered backprojection type. Using the proposed inversion formulae, the authors present simulated reconstructions of characteristic functions in two and three-dimensions with added noise.

In this paper, we present a novel inversion framework and microlocal
analyses for a new Radon transform, $R$, which is defined by the integrals of a compactly supported $L^2$ function, $f$, over surfaces of revolution with centers on a  hypersurface in $\mathbb{R}^n$. This work has important application in, e.g., CST, ECST, and URT. The surfaces of integration we consider are the surfaces of revolution of smooth curves, which are defined by a function, $h$. The surface of centers we consider is of the form $S = Q \times \mathbb{R} \subset \mathbb{R}^n$, where $Q \subset \mathbb{R}^{n-1}$ is an $(n-2)$-dimensional embedded hypersurface. $S$ can be thought of as a generalized cylinder in the sense that in the special case when $Q = S^{n-2}$ (i.e., when $Q$ is a sphere, dimension $n-2$), $S$ defines a cylinder in $\mathbb{R}^n$. Such center surfaces have been considered previously in \cite{haltmeier2017spherical}, where the authors provide explicit inversion formulae for a spherical Radon transform. Using microlocal analysis, we provide necessary and sufficient conditions on $S$ and $h$ for $R$ to be a nondegenerate FIO which satisfies the Bolker condition. This has important implications regarding the existence of image artifacts.  Using a combination of Volterra integral equation theory \cite{Tricomi} and known results on the spherical Radon transform \cite{Q2006:supp}, we prove injectivity results for $R$, and provide a novel inversion framework to recover $f$ from $Rf$ data. Specific cases of this theory have been considered previously in, e.g., \cite{webber2023cylindrical,WHQ}. The results of \cite[sections 3 and 4]{webber2023cylindrical} are a special case of our theory when $S$ is a cylinder in $\mathbb{R}^3$ and $h$ defines a circular arc, and thus the work presented here is a direct generalization of the theory of \cite{webber2023cylindrical}. In \cite{WHQ}, the authors present microlocal analyses of ellipsoid and hyperboloid Radon transforms, and they provide conditions for the Bolker condition to be satisfied. The surfaces of revolution we consider are more general than ellipsoids and hyperboloids of revolution, and thus the microlocal theory we present here is not covered by \cite{WHQ}. 

In \cite{webber2021microlocal}, surface of revolution transforms are also considered, and the authors derive conditions which are equivalent to the Bolker condition. In \cite{webber2021microlocal}, the integral surface centers are constrained to a flat plane, and the surfaces of revolution have central axes which are perpendicular to the plane of centers. In this work, we consider more general center surfaces than a flat plane (i.e., the $S$ as described above), and the surfaces of revolution we consider have axes of revolution which are embedded in $S$. Thus, the theory of \cite{webber2021microlocal} does not apply to this work.

The surface of centers we consider  ($S$) bears similarities with that of \cite{haltmeier2017spherical}. In \cite{haltmeier2017spherical}, only spherical integral surfaces are considered. We consider much more general integrals surfaces, which are the surfaces of revolution of smooth curves defined by a function, $h$. Spherical integral surfaces are a special case of our theory, when $h$ is set to define a semicircular curve.

In addition to the microlocal and injectivity theory presented, we also discuss multiple applications of our theory, e.g., in CST, ECST, and URT, and, using the proposed inversion method, we present simulated image reconstructions in the context of CST and URT.

The remainder of this paper is organized as follows. In section \ref{sect:defns}, we state some preliminary definitions and theory from microlocal analysis that we apply later to prove our theorems. In section \ref{bolker}, we introduce a new Radon transform, $R$, and prove our first main theorem which gives conditions for $R$ to be an FIO which satisfies the Bolker condition. In section \ref{inversion} we prove injectivity results for $R$ and present an inversion framework. In section \ref{examples}, we discuss some example applications of our theory to CST, ECST, and URT. To finish, we present simulated image reconstructions in section \ref{images} to validate our microlocal theory, and we present reconstructions of image phantoms with added noise.

\section{Definitions}\label{sect:defns} 

In this section, we review some theory from microlocal analysis which will be used in our theorems. We first provide some
notation and definitions.  Let $X$ and $Y$ be open subsets of
{$\mathbb{R}^{n_X}$ and $\mathbb{R}^{n_Y}$, respectively.}  Let $\Dc(X)$ be the space of smooth functions compactly
supported on $X$ with the standard topology and let $\mathcal{D}'(X)$
denote its dual space, the vector space of distributions on $X$.  Let
$\Ec(X)$ be the space of all smooth functions on $X$ with the standard
topology and let $\mathcal{E}'(X)$ denote its dual space, the vector
space of distributions with compact support contained in $X$. Finally,
let $\Sc(\rn)$ be the space of Schwartz functions, that are rapidly
decreasing at $\infty$ along with all derivatives. See \cite{Rudin:FA}
for more information. 

If $A\subset \rn$, then $\intt(A)$ and $\bd(A)$ are, respectively, the interior of $A$ and the boundary of $A$.

We now list some notation conventions that will be used throughout this paper:
\begin{enumerate}
\item For a function $f$ in the Schwartz space $\Sc(\mathbb{R}^{n_X})$ or in
$L^2(\mathbb{R}^{n_X})$, we use $\mathcal{F}f$ and $\mathcal{F}^{-1}f$ to denote
the Fourier transform and inverse Fourier transform of $f$,
respectively (see \cite[Definition 7.1.1]{hormanderI}). 

\item We use the standard multi-index notation: if
$\alpha=(\alpha_1,\alpha_2,\dots,\alpha_n)\in \sparen{0,1,2,\dots}^{n_X}$
is a multi-index and $f$ is a function on $\mathbb{R}^{n_X}$, then
\[\partial^\alpha f=\paren{\frac{\partial}{\partial
x_1}}^{\alpha_1}\paren{\frac{\partial}{\partial
x_2}}^{\alpha_2}\cdots\paren{\frac{\partial}{\partial x_{n_X}}}^{\alpha_{n_X}}
f.\] If $f$ is a function of $(\vy,\vx,\vsig)$ then $\partial^\alpha_\vy
f$ and $\partial^\alpha_\vsig f$ are defined similarly.

\item We identify the cotangent
spaces of Euclidean spaces with the underlying Euclidean spaces. For example, the cotangent space, 
$T^*(X)$, of $X$ is identified with $X\times \mathbb{R}^{n_X}$. If $\Phi$ is a function of $(\vy,\vx,\vsig)\in Y\times X\times \rr^N$,
then we define $\dd_{\vy} \Phi = \paren{\partyf{1}{\Phi},
\partyf{2}{\Phi}, \cdots, \partyf{{n_X}}{\Phi} }$, and $\dd_\vx\Phi$ and $
\dd_{\vsig} \Phi $ are defined similarly. Identifying the cotangent space with the Euclidean space as mentioned above, we let $\dd\Phi =
\paren{\dd_{\vy} \Phi, \dd_{\vx} \Phi,\dd_{\vsig} \Phi}$.

\item For $\Omega\subset \rr^m$, we define $\dot{\Omega}
= \Omega\smo$.

\end{enumerate}

\noindent The singularities of a function and the directions in which they occur
are described by the wavefront set \cite[page
16]{duistermaat1996fourier}, which we now define.
\begin{definition}
\label{WF} Let $X$ be an open subset of $\mathbb{R}^{n_X}$ and let $f$ be a
distribution in $\mathcal{D}'(X)$.  Let $(\vx_0,\vxi_0)\in X\times
\drn$.  Then $f$ is \emph{smooth at $\vx_0$ in direction $\vxio$} if
there exists a neighborhood $U$ of $\vx_0$ and $V$ of $\vxi_0$ such
that for every $\Phi\in \Dc(U)$ and $N\in\mathbb{R}$ there exists a
constant $C_N$ such that for all $\vxi\in V$,
\begin{equation}
\left|\Fc(\Phi f)(\lambda\vxi)\right|\leq C_N(1+\abs{\lambda})^{-N}.
\end{equation}
The pair $(\vx_0,\vxio)$ is in the \emph{wavefront set,} $\wf(f)$, if
$f$ is not smooth at $\vx_0$ in direction $\vxio$.
\end{definition}

Intuitively, the elements 
$(\vxo,\vxio)\in \WF(f)$ are the point-normal vector pairs at
which $f$ has singularities; $\vxo$ is the location of the 
  singularity, and  $\vxio$ is the direction in which the
  singularity occurs.
  A
 geometric example of the wavefront set is given by the characteristic function $f$ of a domain $\Omega \subset \mathbb{R}^{n_X}$ with smooth boundary, which is $1$ on $\Omega$ and $0$ off of $\Omega$. Then the wavefront set is
\[
\WF(f) = \{(\vx,t\vv) \ : t \neq 0, \ \vx \in \partial \Omega, \ \mbox{$\vv$ is orthogonal ot $\partial \Omega$ at $\vx$}\}.
\]
In other words, the wavefront set is the set of points in the boundary of $\Omega$ together with the nonzero normal vectors to the boundary. The wavefront set is an important consideration in imaging since elements of the wavefront set will correspond to sharp features of an image.




The wavefront set of a distribution on $X$ is normally defined as a
subset the cotangent bundle $T^*(X)$ so it is invariant under
diffeomorphisms, but we do not need this invariance, so we will
continue to identify $T^*(X) = X \times \rr^{n_X}$ and consider $\WF(f)$ as
a subset of $X\times \dot{\rr^{n_X}}$.


 \begin{definition}[{\cite[Definition 7.8.1]{hormanderI}}] \label{ellip}We define
 $S^m(Y \times X, \mathbb{R}^N)$ to be the
set of $a\in \Ec(Y\times X\times \mathbb{R}^N)$ such that for every
compact set $K\subset Y\times X$ and all multi--indices $\alpha,
\beta, \gamma$ the bound
\[
\left|\partial^{\gamma}_{\vy}\partial^{\beta}_{\vx}\partial^{\alpha}_{\vsig}a(\vy,\vx,\vsig)\right|\leq
C_{K,\alpha,\beta,\gamma}(1+\norm{\vsig})^{m-|\alpha|},\ \ \ (\vy,\vx)\in K,\
\vsig\in\mathbb{R}^N,
\]
holds for some constant $C_{K,\alpha,\beta,\gamma}>0$. 

 The elements of $S^m$ are called \emph{symbols} of order $m$.  Note
that this symbol class is  sometimes denoted $S^m_{1,0}$.  The symbol
$a\in S^m(Y \times X,\rr^N)$ is \emph{elliptic} if for each compact set
$K\subset Y\times X$, there is a $C_K>0$ and $M>0$ such that
\bel{def:elliptic} \abs{a(\vy,\vx,\vsig)}\geq C_K(1+\norm{\vsig})^m,\
\ \ (\vy,\vx)\in K,\ \norm{\vsig}\geq M.
\ee 
\end{definition}

\begin{definition}[{\cite[Definition
        21.2.15]{hormanderIII}}] \label{phasedef}
A function $\Phi=\Phi(\vy,\vx,\vsig)\in
\Ec(Y\times X\times\dot{\mathbb{R}^N})$ is a \emph{phase
function} if $\Phi(\vy,\vx,\lambda\vsig)=\lambda\Phi(\vy,\vx,\vsig)$, $\forall
\lambda>0$ and $\mathrm{d}\Phi$ is nowhere zero. The
\emph{critical set of $\Phi$} is
\[\Sigma_\Phi=\{(\vy,\vx,\vsig)\in Y\times X\times\dot{\mathbb{R}^N}
: \dd_{\vsig}\Phi=0\}.\] 
 A phase function is
\emph{clean} if the critical set $\Sigma_\Phi = \{ (\vy,\vx,\vsig) \ : \
\mathrm{d}_\vsig \Phi(\vy,\vx,\vsig) = 0 \}$ is a smooth manifold {with tangent space defined {by} the kernel of $\mathrm{d}\,(\mathrm{d}_\sigma\Phi)$ on $\Sigma_\Phi$. Here, the derivative $\mathrm{d}$ is applied component-wise to the vector-valued function $\mathrm{d}_\sigma\Phi$. So, $\mathrm{d}\,(\mathrm{d}_\sigma\Phi)$ is treated as a Jacobian matrix of dimensions $N\times (2n+N)$.}
\end{definition}
\noindent By the {Constant Rank Theorem} the requirement for a phase
function to be clean is satisfied if
$\mathrm{d}\paren{\mathrm{d}_\vsig
\Phi}$ has constant rank.

\begin{definition}[{\cite[Definition 21.2.15]{hormanderIII} and
      \cite[section 25.2]{hormander}}]\label{def:canon} Let $X$ and
$Y$ be open subsets of $\rn$. Let $\Phi\in \Ec\paren{Y \times X \times
{\rr}^N}$ be a clean phase function.  In addition, we assume that
$\Phi$ is \emph{nondegenerate} in the following sense:
\[\text{$\dd_{\vy}\Phi$ and $\dd_{\vx}\Phi$ are never zero on
$\Sigma_{\Phi}$.}\]
  The
\emph{canonical relation parametrized by $\Phi$} is defined as
\begin{equation}\label{def:Cgenl} \begin{aligned} \Cc=&\sparen{
\paren{\paren{\vy,\dd_{\vy}\Phi(\vy,\vx,\vsig)};\paren{\vx,-\dd_{\vx}\Phi(\vy,\vx,\vsig)}}:(\vy,\vx,\vsig)\in
\Sigma_{\Phi}},
\end{aligned}
\end{equation}
\end{definition}

\begin{definition}\label{FIOdef}
Let $X$ and $Y$ be open subsets of {$\mathbb{R}^{n_X}$ and $\mathbb{R}^{n_Y}$, respectively.} {Let an operator $A :
\Dc(X)\to \mathcal{D}'(Y)$ be defined by the distribution kernel
$K_A\in \mathcal{D}'(Y\times X)$, in the sense that
$Af(\vy)=\int_{X}K_A(\vy,\vx)f(\vx)\mathrm{d}\vx$. Then we call $K_A$
the \emph{Schwartz kernel} of $A$}. A \emph{Fourier
integral operator (FIO)} of order $m + N/2 - (n_X+n_Y)/4$ is an operator
$A:\Dc(X)\to \mathcal{D}'(Y)$ with Schwartz kernel given by an
oscillatory integral of the form
\begin{equation} \label{oscint}
K_A(\vy,\vx)=\int_{\mathbb{R}^N}
e^{i\Phi(\vy,\vx,\vsig)}a(\vy,\vx,\vsig) \mathrm{d}\vsig,
\end{equation}
where $\Phi$ is a clean nondegenerate phase function and $a$ is a
symbol in $S^m(Y \times X , \mathbb{R}^N)$. The \emph{canonical
relation of $A$} is the canonical relation of $\Phi$ defined in
\eqref{def:Cgenl}. $A$ is called an \emph{elliptic} FIO if its symbol is elliptic. An FIO is called a \emph{pseudodifferential operator} if its canonical relation $\Cc$ is contained in the diagonal, i.e.,
$\Cc \subset \Delta := \{ (\vx,\vxi;\vx,\vxi)\}$.
\end{definition}

Formula \eqref{oscint} given in Definition \ref{FIOdef} can be extended to operators which are only locally represented by kernels in the form \eqref{oscint} as in \cite{hormander}, although we will not require this in the present manuscript. We also note that pseudodifferential operators can always be defined by \eqref{oscint} with phase function $\Phi(\vy,\vx,\vsig) = (\vy-\vx)\cdot \vsig$.


Let $X$ and $Y$ be
sets and let $\Omega_1\subset X$ and $\Omega_2\subset Y\times X$. The composition $\Omega_2\circ \Omega_1$ and transpose $\Omega_2^t$ of $\Omega_2$ are defined
\[\begin{aligned}\Omega_2\circ \Omega_1 &= \sparen{\vy\in Y\st \exists \vx\in \Omega_1,\
(\vy,\vx)\in \Omega_2}\\
\Omega_2^t &= \sparen{(\vx,\vy)\st (\vy,\vx)\in \Omega_2}.\end{aligned}\]
We now state the H\"ormander-Sato Lemma \cite[Theorem 8.2.13]{hormanderI},
which provides  the relationship between the
wavefront set of distributions and their images under FIO.

\begin{theorem}[H\"ormander-Sato Lemma]\label{thm:HS} Let $f\in \Ec'(X)$ and
let ${A}:\Ec'(X)\to \Dc'(Y)$ be an FIO with canonical relation $\Cc$.
Then, $\wf({A}f)\subset \Cc\circ \wf(f)$.\end{theorem}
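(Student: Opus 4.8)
The plan is to reduce the statement to the wavefront set of the Schwartz kernel $K_A$ of $A$ and then track singularities through the elementary operations --- pullback, multiplication, and fibre integration --- by which $Af$ is recovered from $K_A$ and $f$. Writing $\pi_X,\pi_Y \colon Y\times X \to X, Y$ for the two projections, we have $Af = (\pi_Y)_*\bigl(K_A\cdot\pi_X^*f\bigr)$, and the goal is to bound $\WF$ at each stage using the functorial calculus of wavefront sets under these operations.

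First I would bound $\WF(K_A)$. Since $K_A$ is the oscillatory integral \eqref{oscint} with $\Phi$ a clean nondegenerate phase function and $a\in S^m(Y\times X,\rr^N)$, this is the standard non-stationary phase bound on wavefront sets of oscillatory integrals. Fixing $(\vy_0,\vx_0)$ and a covector $(\veta_0,\vxi_0)$ which is \emph{not} equal to $\bigl(\dd_\vy\Phi,\dd_\vx\Phi\bigr)(\vy_0,\vx_0,\vsig)$ for any $\vsig$ with $(\vy_0,\vx_0,\vsig)\in\Sigma_\Phi$, one cuts $K_A$ off near $(\vy_0,\vx_0)$ by some $\chi\in\Dc(Y\times X)$ and integrates by parts: in $\vsig$ where $\dd_\vsig\Phi\ne 0$ (the operator $L = |\dd_\vsig\Phi|^{-2}\overline{\dd_\vsig\Phi}\cdot\partial_\vsig$ gains a power of $|\vsig|$ at each step), and in $(\vy,\vx)$ where $(\veta_0,\vxi_0)$ is not parallel to $(\dd_\vy\Phi,\dd_\vx\Phi)$, using the homogeneity of $\Phi$ in $\vsig$ to control the large-$\vsig$ regime against the symbol growth of order $m$. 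One concludes $|\Fc(\chi K_A)(\lambda(\veta_0,\vxi_0))| = O(\lambda^{-\infty})$, hence
\[
\WF(K_A)\ \subset\ \Lambda := \bigl\{\,\bigl((\vy,\dd_\vy\Phi);(\vx,\dd_\vx\Phi)\bigr) : (\vy,\vx,\vsig)\in\Sigma_\Phi\,\bigr\},
\]
which is exactly the canonical relation \eqref{def:Cgenl} up to the twist $(\vx,\vxi)\mapsto(\vx,-\vxi)$ in the second factor.

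Next I would propagate this through $Af = (\pi_Y)_*(K_A\cdot\pi_X^*f)$. We have $\WF(\pi_X^*f)\subset\{(\vy,\vx;0,\vxi):(\vx,\vxi)\in\WF(f)\}$. The product $K_A\cdot\pi_X^*f$ is well defined because a covector in $\WF(K_A)$ has $\vy$-component $\dd_\vy\Phi\ne 0$ (nondegeneracy in $\vy$) while one in $\WF(\pi_X^*f)$ has vanishing $\vy$-component, so no pair over a common point sums to zero; its wavefront set is contained in $\WF(K_A)\cup\WF(\pi_X^*f)$ together with the sum set $\{(\vy,\vx;\dd_\vy\Phi,\dd_\vx\Phi+\vxi):(\vy,\vx,\vsig)\in\Sigma_\Phi,\ (\vx,\vxi)\in\WF(f)\}$. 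Finally $(\pi_Y)_*$ is integration over the $\vx$-fibres, legitimate since $f\in\Ec'(X)$ has compact support, and retains only covectors of the form $(\vy,\veta;\vx,0)$; a covector coming from $\WF(K_A)$ alone has $\vx$-component $\dd_\vx\Phi\ne 0$ (nondegeneracy in $\vx$), and one from $\WF(\pi_X^*f)$ alone has $\vx$-component $\vxi\ne 0$, so both drop out. Thus $(\vy,\veta)\in\WF(Af)$ forces $(\vy,\vx,\vsig)\in\Sigma_\Phi$ and $(\vx,\vxi)\in\WF(f)$ with $\dd_\vx\Phi+\vxi=0$ and $\veta = \dd_\vy\Phi$; equivalently $\bigl((\vy,\veta);(\vx,-\dd_\vx\Phi)\bigr)\in\Cc$ with $(\vx,-\dd_\vx\Phi)\in\WF(f)$, i.e.\ $(\vy,\veta)\in\Cc\circ\WF(f)$.

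I expect the main obstacle to be the first step: making the non-stationary phase estimate uniform, handling the symbol growth of order $m$ and the homogeneity of $\Phi$ simultaneously, and checking that cleanness (with possible excess) of the phase does not weaken this containment --- it affects only the order of $A$ and its principal symbol, not $\WF(K_A)$. By comparison the bookkeeping in the second step is routine, once one has noticed that the two nondegeneracy hypotheses $\dd_\vy\Phi\ne 0$ and $\dd_\vx\Phi\ne 0$ on $\Sigma_\Phi$ are precisely what make the product well defined and force the spurious pieces out of the push-forward. The whole statement is, of course, \cite[Theorem 8.2.13]{hormanderI}.
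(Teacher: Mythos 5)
Correct. The paper does not prove this lemma at all---it is quoted directly from \cite[Theorem 8.2.13]{hormanderI}---and your argument is exactly the standard proof given there: the non-stationary phase bound $\WF(K_A)\subset\{((\vy,\dd_\vy\Phi);(\vx,\dd_\vx\Phi)):\dd_\vsig\Phi=0\}$, followed by the pullback--product--pushforward wavefront calculus, with the nondegeneracy conditions $\dd_\vy\Phi\neq 0$, $\dd_\vx\Phi\neq 0$ used precisely as you use them to justify the product and to eliminate the spurious terms in the pushforward.
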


Let $A$ be an FIO, then its formal adjoint $A^*$ is also an FIO, and if $\Cc$ is the canonical relation of $A$, then the
canonical relation of $A^*$ is $\Cc^t$ \cite{Ho1971}. Many imaging techniques are based
on application of the adjoint operator $A^*$ and so to understand artifacts
we consider $A^* A$ (or, if $A$ does not map to $\Ec'(Y)$, then
$A^* \psi A$ for an appropriate cutoff $\psi$). Because of Theorem
\ref{thm:HS},
\begin{equation}
\label{compo}
\wf(A^* \psi A f) \subset \Cc^t \circ \Cc \circ \wf(f).
\end{equation}
The next two definitions provide tools to analyze the composition in equation \eqref{compo}.
\begin{definition}
\label{defproj} Let $\Cc\subset T^*(Y\times X)$ be the canonical
relation associated to the FIO ${A}:\mathcal{E}'(X)\to
\mathcal{D}'(Y)$. We let $\Pi_L$ and $\Pi_R$ denote the natural left-
and right-projections of $\Cc$, projecting onto the appropriate
coordinates: $\Pi_L:\Cc\to T^*(Y)$ and $\Pi_R : \Cc\to T^*(X)$.
\end{definition}

Because $\Phi$ is nondegenerate, the projections do not map to the
zero section.  
%
%
If $A$ satisfies our next definition, then $A^* A$ (or $A^* \psi A$) is a pseudodifferential operator
\cite{GS1977, quinto}.

\begin{definition}[Bolker condition]\label{def:bolker} Let
${A}:\Ec'(X)\to \Dc'(Y)$ be a FIO with canonical relation $\Cc$ then
{$A$} (or $\Cc$) satisfies the \emph{Bolker Condition} if
the natural projection $\Pi_L:\Cc\to T^*(Y)$ is an embedding
(injective immersion).\end{definition}

Thus, using \eqref{compo}, we see that under the Bolker condition the wavefront set of $A^* \varphi Af$ will be contained in the wavefront set of $f$. Intuitively, the reconstructed image ($A^* \varphi Af$) will only include singularities at the same positions and in the same directions as the original image ($f$). 

\section{Surface of revolution transforms} 
\label{bolker}
We will consider generalized
Radon transforms in Euclidean space of three or more dimensions that
integrate over fairly arbitrary collections of surfaces of
revolution with axes on a smooth cylindrical surface.
\begin{figure}[!h]
\centering
\begin{subfigure}{1\textwidth}
\centering
\includegraphics[width=0.9\linewidth, height=9cm, keepaspectratio]{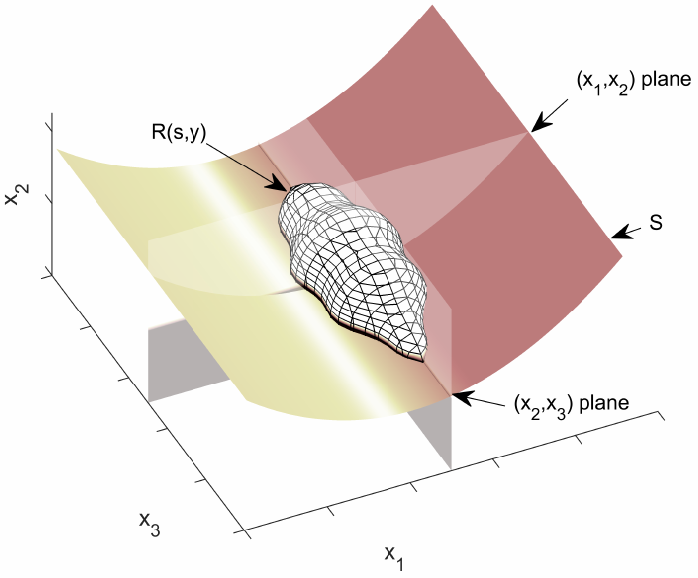}
\subcaption{3-D view} \label{1A}
\end{subfigure}
\\
\hspace{-2cm}
\begin{subfigure}{0.3\textwidth}
\centering
\begin{tikzpicture}[scale=0.5]
\draw [->,line width=1pt] (0,0)--(5,0)node[right] {$x_2$};
\draw [->,magenta,line width=1pt] (0,-8)--(0,8)node[right] {$x_3$};
\coordinate (O) at (0,0);
\draw [samples=200,blue,domain=-35.4/5:33/5] plot ({0.33*( (-(\x*5)*(\x*5) + 100*sin((\x*5/3)*(180/3.14)) + 1200)/100 )},{\x});
\draw [samples=200,blue,domain=-35.4/5:33/5] plot ({-0.33*( (-(\x*5)*(\x*5) + 100*sin((\x*5/3)*(180/3.14)) + 1200)/100 )},{\x});
\node at (3.75,5) {$\mathcal{R}(s,\vy)$};
\draw [<->] (-3.9,0)--(0,0);
\node at (-1.8,-0.6) {$\sqrt{h}(s,x_3)$};
\end{tikzpicture}
\subcaption{$(x_2,x_3)$ plane} \label{1B}
\end{subfigure}
\hspace{1cm}
\begin{subfigure}{0.3\textwidth}
\centering
\begin{tikzpicture}[scale=0.6]
\draw [<-,line width=1pt] (-6,0)node[left] {$-x_1$}--(0,0);
\draw [->,line width=1pt] (0,-6)--(0,6)node[right] {$x_2$};
\draw [samples=200,thick,magenta,domain=-5:5] plot ({\x},{(\x*\x + 2*\x)/5});
\draw [blue] (0,0) circle (3.25);
\draw [<->] (0,0)--(3.25,0);
\node at (1.625,-0.55) {$\sqrt{h}(s,x_3)$};
\node at (4.6,5) {$Q$};
\end{tikzpicture}
\subcaption{$(x_1,x_2)$ plane} \label{1C}
\end{subfigure}
\caption{Example $S$ and $\mathcal{R}(s,\vy)$ when $n=3$. (A) - 3-D view. (B) - $(x_2,x_3)$ plane cross-section. (C) - $(x_1,x_2)$ plane cross-section. The plane slices in (B) and (C) are labeled in (A). The pink smooth surface in (A) is $S$, and the black and white meshed surface in (A) is $\mathcal{R}(s,\vy)$. The pink line in (B) is a line, $\ell_{\vy}$ ($\vy = 0$ in this example), as described in the main text. The pink curve in (C), is $Q$. The blue curves in (B) and (C) (e.g., the circle in (C)) are the intersections of $\mathcal{R}(s,\vy)$ with the $(x_2,x_3)$ and $(x_1,x_2)$ planes, respectively. The blue wavy curve in (B) is $\{(\pm\sqrt{h}(s,x_3),x_3)\}$.}
\label{fig1}
\end{figure}
For $\vw = (w_1,\ldots,w_n) \in \rn$, we define $\vw' = (w_1,\dots, w_{n-1})\in \rnm$ throughout this paper. 
The generalized cylindrical surface in $\rn$ is defined by a smooth embedded
hypersurface $Q\subset \rnm$ and with axis parallel the $x_n$ axis:
\bel{def:S} S = Q\times \rr.\ee
Each $\vy\in S$ is on a unique line $\ell_\vy$ contained in $S$ and
parallel the $x_n$ axis. Our surfaces of revolution will be determined
by specifying the distance from $\ell_\vy$ to each point on the
surface.   Let $h=h(s,x)$ be a nonnegative
smooth function from $\Omega_h$ to $\rr$, where $\Omega_h\subset\mathbb{R}^2$ is open, let $C = \{s\in\mathbb{R} : \exists x\in \mathbb{R}\ \text{s.t.}\  (s,x)\in\Omega_h\}$, and 
let \[Y = C\times S.\]
Our surfaces are defined as follows.  Let $(s,\vy)\in Y$ and define
\begin{equation}\label{def:Psi-Rc}
\begin{aligned}\Psi(s,\vy;\vx)&= \paren{\vx'-\vy'}\cdot
\paren{\vx'-\vy'}- h(s,x_n - y_n)\\
 \Rc(s,\vy) &=
\sparen{\vx\in \rn\st \Psi(s,\vy;\vx) = 0}.\end{aligned}
\end{equation}
When $(s,\vy)\in Y$,  $\Rc(s,\vy)$ is the surface of
revolution about $\ell_\vy$ of the smooth curve defined by $x_n\mapsto
\sqrt{h}(s,x_n-y_n)$.   Varying $s$ changes the
curve defining the surface of revolution. If one fixes $s\in C$ and
$\vy'\in Q$, then the surfaces $\Rc(s,(\vy',y_n))$ are translates of
each other along $\ell_\vy$ as $\yn$ varies. An example $S$, $\mathcal{R}(s,\vy)$, and $\sqrt{h}$ curve are illustrated in figure \ref{fig1}, in $n=3$ dimensions. Now we define our Radon transform.

\begin{definition}\label{def:Radon}
Let $D$ be an open set disjoint from $S$. Let $f\in \mathcal{D}(D)$
and $(s,\vy)\in Y$. Our Radon transform $Rf(s,\vy)$ is defined to be
the integral of $f$ over $\Rc(s,\vy)$ in surface area
measure:
\begin{equation}\label{def:R1}
Rf(s,\vy) = \int_{\mathbb{R}^n} |\nabla_{\vx}\Psi(s,\vy;\vx)| \delta(\Psi(s,\vy;\vx)) f(\vx) \ \dd \vx.
\end{equation}
\end{definition}

It is also possible to extend $R$ to $\mathcal{E}'(D)$ by continuity.


In our results, we will put conditions on $h$ so that $\Rc(s,\vy)$ is a smooth
embedded manifold. We can now state the main theorem of
this section.

%
%

\begin{theorem}\label{thm:bolker} Let $Q$ be an embedded smooth
hypersurface in $\rnm$ and let $S = Q\times \rr$. Let $D$ be an open
set in $\rn$, which is disjoint from $S$. 



\begin{enumerate}
\item\label{boundary limit} For each $s\in C$, let \[\Omega_{h,s}
= \left\{x\in \rr : (s,x)\in \Omega_h\right\},\] and assume, for each
$s\in C$, that $h(s,\cdot)\to 0$ on the boundary of
$\Omega_{h,s}$. 
\item\label{tangent plane} Let $(s,\vy)\in Y$ and
$\vx$ be any point in $ D\cap \Rc(s,\vy)$. Assume that the reflection
of $\vx$ in the tangent plane, $T$, to $S$ at $\vy$ is not in $D$.

\item\label{hs cond} Assume for every $(s,x)\in \Omega_h$,
$h_s(s,x) \neq 0$. 

\item\label{hx/hs injective} Assume for each fixed $s\in C$, the
function $(h_{x} / h_s)(s,\cdot)$ is injective on $\{x \in\mathbb{R} : (s,x) \in\Omega_h\}$. 

\item\label{hx/hs neq 0} Assume
$\frac{\mathrm{d}}{\mathrm{d}{x}} \paren{ h_{x} / h_s } \neq 0$ on $\Omega_h$.
\end{enumerate}

Then $R:\Ec'(D)\to \Dc'(Y)$ is a nondegenerate FIO that satisfies the
Bolker condition.

Conversely, assume $R$ satisfies the Bolker assumption above $D$, and
$h$ satisfies \eqref{hs cond}. Then \eqref{tangent plane},
\eqref{hx/hs injective}, and \eqref{hx/hs neq 0} all hold.
\end{theorem}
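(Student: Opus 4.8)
The plan is to compute the canonical relation $\Cc$ of $R$ explicitly from the phase function, and then to translate the Bolker condition (injectivity and immersion of $\Pi_L$) into pointwise conditions on $h$ and on the geometry of $S$. First I would write the Schwartz kernel of $R$ as an oscillatory integral: using the identity $\delta(\Psi) = \frac{1}{2\pi}\int_{\rr} e^{i\sigma \Psi(s,\vy;\vx)}\,\dd\sigma$, the operator $R$ is an FIO with one-dimensional phase variable $\sigma$ and phase $\Phi(s,\vy,\vx,\sigma) = \sigma\,\Psi(s,\vy;\vx)$. The amplitude $|\nabla_\vx \Psi|$ is elliptic of order $0$ away from points where $\nabla_\vx\Psi = 0$; condition \eqref{tangent plane} together with the assumption $D$ disjoint from $S$ should guarantee that $\Rc(s,\vy)$ meets $D$ in a smooth embedded hypersurface with nonvanishing $\nabla_\vx\Psi$, so that $R$ is a genuine (nondegenerate) FIO. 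The critical set $\Sigma_\Phi$ is $\{\Psi = 0\}$ (an open condition times the surface), $\dd_\vx\Phi = \sigma\,\dd_\vx\Psi$ is nonzero there, and $\dd_{(s,\vy)}\Phi = \sigma\,\dd_{(s,\vy)}\Psi$; I would check $\dd_{(s,\vy)}\Psi \neq 0$ on $\Sigma_\Phi$ using \eqref{hs cond} (the $\partial_s$ component is $-\sigma h_s \neq 0$), which gives nondegeneracy. Then $\Cc$ consists of pairs $\big((s,\vy, \sigma\,\dd_{(s,\vy)}\Psi);\,(\vx, -\sigma\,\dd_\vx\Psi)\big)$ with $\vx \in \Rc(s,\vy)\cap D$.

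For the forward direction, the heart is to show $\Pi_L : \Cc \to T^*(Y)$ is injective and an immersion. Injectivity means: given the left covector — which encodes $(s,\vy)$ and, through the components $\sigma h_s$, $\sigma h_x$, and $\sigma$ times the tangential derivatives of $\Psi$ in $\vy'$ — one recovers $(\vx,\xi)$ uniquely. Since $\sigma h_s$ is one of the recorded components and $h_s\neq 0$, the ratios $h_x/h_s$ and (via the $\dd_{\vy'}\Psi$ components, which involve $\vx' - \vy'$) the direction of $\vx' - \vy'$ and the quantity $|\vx'-\vy'|$ are determined; the key point is that $\Psi = 0$ fixes $|\vx'-\vy'|^2 = h(s, x_n - y_n)$, while the recorded value of $h_x/h_s$ at the argument $x_n - y_n$, combined with injectivity of $x \mapsto (h_x/h_s)(s,x)$ from \eqref{hx/hs injective}, pins down $x_n - y_n$ and hence $x_n$; then $h(s,x_n-y_n)$ gives $|\vx'-\vy'|$, and the tangential data give the full $\vx'$. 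Once $(s,\vy,\vx)$ is recovered, the covector $\xi$ is determined, so $\Pi_L$ is injective. For the immersion property I would show the differential of $\Pi_L$ has full rank by differentiating these relations; this is where $\frac{\dd}{\dd x}(h_x/h_s) \neq 0$ (condition \eqref{hx/hs neq 0}) enters — it guarantees that the map $x_n \mapsto$ (recorded $h_x/h_s$ value) is a local diffeomorphism, so no tangent direction in $\Cc$ collapses under $\Pi_L$. Condition \eqref{boundary limit} is used to ensure the surfaces close up / the relevant parameter ranges behave well so that $\Cc$ is the full canonical relation and no boundary degeneracies intrude.

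For the converse, I would argue contrapositively: assume \eqref{hs cond} holds (so $R$ is still an FIO and $\Cc$ is as above) and suppose one of \eqref{tangent plane}, \eqref{hx/hs injective}, \eqref{hx/hs neq 0} fails; in each case exhibit a failure of the Bolker condition. If \eqref{tangent plane} fails, there is a point $\vx\in D\cap\Rc(s,\vy)$ whose reflection $\vx^*$ in the tangent plane $T$ to $S$ at $\vy$ also lies in $D$; since $\Psi$ depends on $\vx'$ only through $|\vx'-\vy'|^2$ and on $x_n$ only through $x_n - y_n$, reflecting in $T$ produces another point of $\Rc(s,\vy)$ with the same value of $|\nabla_\vx\Psi|$ and the same $\dd_{(s,\vy)}\Psi$, hence the same left covector but a different right point — so $\Pi_L$ is not injective. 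If \eqref{hx/hs injective} fails, there are $x\neq x'$ with $(h_x/h_s)(s,x) = (h_x/h_s)(s,x')$ (and, after adjusting, the same value of $h$ — here one has to be a little careful, but one can use the freedom in $y_n$ and the tangential variables, or first reduce to a one-parameter family where the collision is genuine) giving two distinct elements of $\Cc$ with the same image under $\Pi_L$. If \eqref{hx/hs neq 0} fails at some point, the differential of $\Pi_L$ drops rank there by the computation from the forward direction, so $\Pi_L$ is not an immersion.

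The main obstacle I anticipate is the bookkeeping in identifying precisely which combination of the recorded left-covector components decouples the "radial" variable $x_n - y_n$ from the tangential variables $\vx' - \vy'$ — i.e., organizing the Jacobian of $\Pi_L$ so that the roles of conditions \eqref{hx/hs injective} and \eqref{hx/hs neq 0} become transparent — and, in the converse, making sure the counterexamples to injectivity produce genuinely distinct points of $\Cc$ (not merely distinct $\vx$'s that happen to give the same covector for trivial reasons) while only assuming the negation of a single condition. The reflection argument for \eqref{tangent plane} and its converse is clean; the algebra around $h_x/h_s$ is where care is needed.
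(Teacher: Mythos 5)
Your plan follows the same route as the paper (phase $\Phi=\sigma\Psi$, nondegeneracy from $h_s\neq 0$ and $D\cap S=\emptyset$, explicit computation of $\Pi_L$, with \eqref{hx/hs injective} controlling the axial variable and \eqref{hx/hs neq 0} the immersion), but the forward direction has a genuine gap in the injectivity step. The left covector records the $\vy''$-derivatives of $\Phi$, which are $-2\sigma\bparen{(x_1-q)\nabla q+(\vx''-\vy'')}$, i.e.\ essentially $[\nabla q, I](\vx'-\vy')$: an $(n-2)$-dimensional quantity. This does \emph{not} determine the direction of $\vx'-\vy'$, only its projection modulo the null direction $(1,-\nabla q)^T$ normal to $Q$. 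Together with $|\vx'-\vy'|^2=h(s,x_n-y_n)$ this pins down $\vx'$ only up to reflection in the tangent plane to $S$ at $\vy$, so your claim that ``the tangential data give the full $\vx'$'' is false as stated; indeed it cannot be true without hypothesis \eqref{tangent plane}, which your forward argument never invokes (its necessity is exactly what your own converse reflection argument shows). The paper closes this step by observing that two preimages must be mirror images in that tangent plane and then using \eqref{tangent plane} to exclude the mirror point from $D$.

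A second gap is in the immersion step: condition \eqref{hx/hs neq 0} alone does not give full rank of $D\Pi_L$. After parameterizing $\vx'$ by spherical coordinates about $\ell_\vy$, the differential has a block structure; the $(x_n,\sigma)$ block has determinant $-\sigma h_s^2\,\frac{\mathrm{d}}{\mathrm{d}x}\paren{h_x/h_s}$, which \eqref{hs cond} and \eqref{hx/hs neq 0} handle as you say, but there is also an angular block of the form $-2\sigma\sqrt{h}\,[\nabla q, I]\Upsilon$ (with $\Upsilon$ spanning the tangent space of the sphere $S^{n-2}$), and its rank fails precisely when $\vx$ lies in the tangent plane to $S$ at $\vy$ — so \eqref{tangent plane} (interpreted so that $D\cap\Rc(s,\vy)$ avoids $T$) is needed again here, and your plan does not address these angular directions. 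Two smaller points: \eqref{boundary limit} is used to make the integral defining $R$ well-defined (not to ensure $\Cc$ is ``full''), and in the converse of \eqref{hx/hs injective} you must actually construct two points of $\Cc$ with the same image, adjusting $\sigma$ via $\sigma_2=\sigma_1 h_s(s,x^{(1)}_n-y_n)/h_s(s,x^{(2)}_n-y_n)$ so the $\dd_s$ and $\dd_{y_n}$ components match — you flagged this, and that caution is warranted.
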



%



Note that assumption \eqref{boundary limit} holds if $h\to 0$ on
$\bd(\Omega_h)$, but this  more general version will be useful for the
example in section  \ref{sect:circular}.
Regarding assumption \eqref{tangent plane}, if $\vx\in
D\cap \Rc(s,\vy)$ and its reflection $\vx_m$ in a tangent plane $T$ to
$S$ at $\vy$ is also in $D$, then $\Pi_L$ will not be injective and
reconstructions using the normal operator can create artifacts at
$\vx_m$ from singularities at $\vx$. Condition \eqref{tangent plane} would be valid if $D$ is connected,
open and disjoint from every tangent plane to $S$. A common special
case occurs when $S$ is a smooth surface that is the boundary of a
convex cylinder; then  \eqref{tangent plane} holds
if $D$ is inside the convex cylinder. If $\vx\in T$, then $\vx$ is its own mirror point.  Therefore, we will interpret \eqref{tangent plane} to mean that $D\cap\Rc(s,\vy)$ is disjoint from $T$.



\begin{example}
Here are two interesting transforms in $\rthree$ that fit in our
theory. More general algebraic surfaces were considered in \cite{WHQ}.
Let $Q$ be a smooth hypersurface in $\rtwo$ and let $S= Q\times \rr$
and let $Y = (0,\infty)\times S$.  

\textit{The spherical transform:} Let $h(s,x) = s-x^2$.
Then $\Rc(s,\vy)$ is the sphere $(x_1-y_1)^2 + (x_2-y_2)^2 +(x_3-y_3)^2
= s$ for $(s,\vy)\in Y$.

\textit{The transform on hyperboloids of two sheets:}  Let  $h(s,x) =
x^2+s$.  Then 
$\Rc(s,\vy)$ is the hyperboloid of two sheets with axis of rotation
$\sparen{\vy'}\times \rr$ and equation
$(x_1-y_1)^2 +(x_2-y_2)^2 - (x_3-y_3)^2=s$  for $(s,\vy)\in Y$.   
\end{example}

\begin{proof}
Because the Bolker condition is local above $Y$, we will define $Q$
locally as a graph using coordinates. For convenience, we now define
$\vyp = (y_2,\dots,y_{n-1})\in \rr^{n-2}$. Let $\Omega$ be an open set
in $\rr^{n-2}$, the domain of the coordinates, and suppose $Q$ is the
graph of a function $q\in C^\infty(\Omega,\rr)$. That is, 
\[Q=\sparen{(q(\vyp),\vyp)\st \vyp\in \Omega},\qquad S =
Q\times\rr\] and 
\[(\vyp,\yn)\mapsto 
(q(\vy''),\vy'',y_n)\] give coordinates on $S$. Using these
coordinates, our Radon transform can be written
\begin{equation}\label{eq:Radon}
\begin{split}
Rf(s,(\vyp,\yn))&=\int_{\mathbb{R}^{n}}|\nabla_{\vx}\Psi|\delta\paren{\Psi(s,(\vyp,\yn);\vx)}f(\vx)\mathrm{d}\vx\\
&=\int_{-\infty}^{\infty}\int_{\mathbb{R}^{n}}|\nabla_{\vx}\Psi|f(\vx)e^{i\sigma
\Psi(s,(\vyp,\yn);\vx)}\mathrm{d}\vx\mathrm{d}\sigma,
\end{split}  
\end{equation}
where
 \[\Psi(s,(\vyp,\yn);\vx)= \paren{ x_1 -q(\vy'') }^2 + \paren{\vxp -
 \vyp}\cdot \paren{\vxp-\vyp} - h(s,x_n - y_n).\]
Note, the integral in \eqref{eq:Radon} is well-defined given that, by assumption, $D$ is bounded away from $S$, and $h$ goes to zero at the boundary of $\Omega_h$. Then, the phase function of $R$ is $\Phi =\sigma\Psi$. The weight, $|\nabla_{\vx}\Psi|$, is included following the theory of \cite{palamodov2012uniform}, so that the integrals are defined with respect to the surface measure on the surfaces of revolution.
 

We now explain why $\Phi$ is a nondegenerate phase function. From
\eqref{Pi_L} just below and assumption \eqref{hs cond} in our theorem, one sees
$\dd_s\Phi$ is never zero so the left projection is never zero. A
calculation shows that \[\dd_\vx \Phi = \sigma\paren{2[(x_1,\vxp) -
(q,\vyp)], -h_{x}(s,\xn-\yn)},\] and this is never zero
since $D$ is disjoint from the cylinder $S$. In addition, this  shows
$\Rc(s,(\vyp,\yn))\cap D$ is a smooth manifold, so $R$ is a standard Radon
transform.

The left projection of $R$ is
\begin{equation}
\label{Pi_L}
\Pi_L(s,(\vyp,\yn);\vx,\sigma) = \paren{ s, (\vyp,\yn), \overbrace{-\sigma
h_s}^{\mathrm{d}_s\Phi}, \overbrace{-2\sigma\left[(x_1 -q) \nabla
q +(\vxp-\vyp) \right] }^{\mathrm{d}_{\vyp} \Phi},\overbrace{
\sigma h_{x} }^{\mathrm{d}_{y_n}\Phi}},
\end{equation}
where $\nabla q = (q_2,\ldots,q_{n-2})^T$, and we use the convention
$q_i = q_{y_i}$.

Let \bel{PiL equal} \Pi_L(s,(\vy'',y_n),({\vx^{(1)}}',x^{(1)}_n),\sigma_1) =
\Pi_L(s,(\vy'',y_n),({\vx^{(2)}}',x^{(2)}_n),\sigma_2).\ee Using
\eqref{hs cond} and that
the $\dd_s \Phi$ and the $\dd_{\yn}\Phi$ components in \eqref{Pi_L}are equal, we see
\begin{equation}\label{hx/hs}
\frac{h_{x}}{h_s}\paren{s,x^{(1)}_n - y_n} =
\frac{h_{x}}{h_s}\paren{s,x^{(2)}_n - y_n} \implies x_n = x^{(1)}_n = x^{(2)}_n,
\end{equation}
by condition \eqref{hx/hs injective}. Further $\sigma_1 = \sigma_2$
since $h_s \neq 0$ by condition \eqref{hs cond}.

Setting the $\dd_{\vyp}$ components of $\Pi_L$ equal, we see
\bel{dy''}\left[ \nabla q, I \right]\paren{{\vx^{(1)}}'-{\vx^{(2)}}'}
= 0.\ee The rows of $A = \left[ \nabla q, I \right]$ span a plane
parallel to the plane tangent to $Q$ at $(q,\vyp))$, and \[\nul(A) =
\{ \nu (1,-\nabla q)^T : \nu\in \mathbb{R}\}.\] Thus, ${\vx^{(1)}}'$
and ${\vx^{(2)}}'$ must lie on a line parallel to $(1,-\nabla q)^T$.
As ${\vx^{(1)}}'$ and ${\vx^{(2)}}'$ are also on the same sphere
centered at $(q,\vyp)$ of radius $h(s,\xn-\yn)$, either ${\vx^{(1)}}'$
and ${\vx^{(2)}}'$ are equal or they are the reflections of one
another in the tangent plane to $Q$ at $(q,\vyp)$. Thus, $\vx_1$ and
$\vx_2$ are either equal or the reflections of one another in the
plane tangent to $S$ at $(\vyp,\yn)$. By our assumption \eqref{tangent
plane}, ${\vx^{(1)}}'={\vx^{(2)}}'$ and $\Pi_L$ is injective above
$D$.



For $\vx\in \Rc(s,(\vyp,\yn))$, $(x_1,\vxp)$ lies on an $n-2$
dimensional sphere of radius $\sqrt{h}(s,x_n-y_n)$. Let us parameterize this
sphere using standard spherical coordinates $\dalpha =
(\alpha_1,\ldots,\alpha_{n-2})$. Then, \bel{def:Theta} (x_1,\vxp) =
(q(\vyp),\vyp) + \sqrt{h}(s,x_n-y_n)\Theta(\dalpha) = (q,\vyp) +
\sqrt{h}\Theta,\ee where $\Theta=\Theta(\dalpha) \in S^{n-2}$ is not a
pole of the sphere. Now, equation \eqref{Pi_L} becomes
\begin{equation}
\label{Pi_L_1}
\Pi_L(s,(\vy'',y_n),\dalpha,x_n,\sigma) = \paren{ s, (\vy'',y_n), \overbrace{-\sigma
h_s}^{\mathrm{d}_s\Phi}, \overbrace{-2\sigma\sqrt{h}\left[ \nabla q, I \right]\Theta^T }^{\mathrm{d}_{\vyp}},\overbrace{ \sigma h_{x} }^{\mathrm{d}_{y_n}\Phi} },
\end{equation}
where $I = I_{(n-2) \times (n-2)}$. Since $h>0$ on $\Rc(s,(\vyp,\yn))$, these coordinates are
smooth.

 The differential of $\Pi_L$ is
\begin{equation}\label{DPi_L}
D\Pi_L = \kbordermatrix {&\mathrm{d}s,\nabla_{(\vy'',y_n)} & \mathrm{d}\sigma & \mathrm{d}x_n & \nabla_{\dalpha} \\
s, (\vy'',y_n) & I_{n\times n} & \textbf{0}_{n \times 1} & \textbf{0}_{n \times 1} & \textbf{0}_{n \times (n-2)} \\
\mathrm{d}_s\Phi & \cdot & -h_s & -\sigma h_{s x_n} & \textbf{0}_{1 \times (n-2)}\\
\mathrm{d}_{y_n}\Phi  & \cdot & h_{x} & \sigma h_{xx} & \textbf{0}_{1 \times (n-2)} \\ 
\nabla_{\vyp}\Phi & \cdot & \cdot & \cdot & -2\sigma \sqrt{h}\left[ \nabla q, I \right] \Upsilon,}
\end{equation}
where $\Upsilon =
[\Theta^T_{\alpha_1},\ldots,\Theta^T_{\alpha_{n-2}}]$, and
$[\Theta^T,\Upsilon]$ is a matrix with nonzero orthogonal columns.

We
have
\begin{equation}
\label{h_det}
\det \begin{pmatrix}  -h_s & -\sigma h_{s x_n} \\
h_{x} & \sigma h_{xx} \end{pmatrix} = -\sigma \paren{ h_sh_{xx} - h_{x}h_{s x_n} } = -\sigma h^2_s \cdot \frac{\mathrm{d}}{\mathrm{d}x_n}\paren{ \frac{h_{x}}{h_s} } \neq 0,
\end{equation}
by conditions \eqref{hs cond} and \eqref{hx/hs neq 0}. 

If $\det\paren{\left[ \nabla q, I \right] \Upsilon} = 0$, then some
linear combination of the columns of $\Upsilon$ is in $\nul(A)$, so
$\nu(1,-\nabla q)^T$ is in the span of the columns of $\Upsilon$.
Since $\Theta^T$ is normal to $\spann(\Upsilon)$,
$\Theta^T$ must be normal to $  (1,-\nabla q)^T$, which means
$\vx$ is on the plane tangent to $S$ at $(\vyp,\yn)$. This is not
possible because of \eqref{tangent plane}. Therefore, $\Pi_L$ is an
immersion.


Now, assume $R$ satisfies the Bolker condition above $D$, and $h$
satisfies \eqref{hs cond}. Then, $R$ is, by definition, a FIO. Because
$\Pi_L$ is an immersion, $\Pi_R$ is also an immersion \cite{Ho1971},
so the phase function $\Phi$ is nondegenerate.

Regarding \eqref{hx/hs injective}, if $h_{x}/h_s$ is not injective
then there are $\sigma_1, \sigma_2, x_n^1, x_n^2$ not all equal to
each other such that the third and last components in \eqref{Pi_L} are
equal. Since $h_s$ is never zero, this means that equality holds in
the left-hand side of \eqref{hx/hs}.   So, if Bolker holds then
\eqref{hx/hs injective} is true.


Let $\vx\in \Rc(s,(\vyp,\yn))$ and let $\vx_m$ be its reflection in
the tangent plane, $T$,to $S$ at $(q,\vyp,\yn)$. If $\vx \neq \vx_m$ and
$\vx_m\in D$, then the argument around \eqref{dy''} shows that $\Pi_L$
is not injective. This proves \eqref{tangent plane} for points not on
$T$.

If Bolker holds, the matrix in\eqref{DPi_L}, $[\nabla q, I]\Upsilon$,
has maximum rank, so $\spann(\Upsilon)$ contains no nonzero vector in
$\nul(A)$. Therefore, $\Theta^T$ in \eqref{def:Theta} is not
perpendicular to $(-1,\nabla q)^T$ and $\vx$ is not in the tangent
plane to $S$ at $(s,(\vyp,\yn))$. This shows \eqref{tangent plane} for
points on $T$. For the same reason, \eqref{h_det} must be valid since
$h_s\neq 0$. This shows that \eqref{hx/hs neq 0} holds. \end{proof}

\section{An Inversion Framework}
\label{inversion}

We will present two different cases in which inversion of the surface of rotation Radon transform is possible by first taking a Fourier transform in the vertical coordinate. 

\subsection{Analytic inversion formula for the cone transform}
\label{cone_formula}

In this section, we consider the case when $h(s,x) = sx$, and $\mathcal{R}(s,\vy)$ is a cone. In this case, we have the alternate expression for $R$
\begin{equation}
\label{R_C}
Rf(s,\vy) = \sqrt{1+s^2}\int_{0}^{\infty} \int_{\Theta\in S^{n-2}}t^{n-2}f\paren{ t\Theta + \vy',y_n - st} \mathrm{d}S^{n-2}\mathrm{d}t,
\end{equation}
where $s\in\mathbb{R}$. $Rf$ defines the integral of $f$ over a cone with gradient $s$, vertex $\vy \in S$, and axis of revolution parallel to $x_n$ which is contained in $S$. 

Next, we will present the theorem which proves injectivity of the cone transform. For the proof we will require the spherical Radon transform
$$M : L^2_c(\mathbb{R}^{n-1}) \to L^2\paren{\mathbb{R}_+\times \mathbb{R}^{n-1}}$$ 
For $f \in L^2_c(\mathbb{R}^{n-1})$, $Mf(t,\vy')$ defines the
integral of $\hat{f}_{\xi} \in L^2_c(\mathbb{R}^{n-1})$ over the $(n-2)$-dimensional sphere with radius $t$ and center $\vy'$.


\begin{theorem}
\label{thm:inversion1}
Suppose that $S = Q\times \mathbb{R}$ is a real-analytic manifold and $Q$ is 
the boundary of an open convex set $\Sigma \subset \mathbb{R}^{n-1}$. Let  $f\in L^2_c(\Sigma \times \mathbb{R})$ and $h(s,x) = sx$. Under these conditions, $Rf =
0 \implies f = 0$.
\end{theorem}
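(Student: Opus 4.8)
The plan is to reduce injectivity of the cone transform to the known injectivity of the spherical Radon transform with centers on $Q$, by first taking a Fourier transform of \eqref{R_C} in the axial variable $y_n$. Write a point of $\mathbb{R}^n$ as $(\vz,x_n)$ with $\vz\in\mathbb{R}^{n-1}$, and set $\hat f_\xi(\vz):=\mathcal{F}_{x_n}[f(\vz,\cdot)](\xi)$. Since $f\in L^2_c(\Sigma\times\mathbb{R})$ is supported in a compact subset $K$ of $\Sigma\times\mathbb{R}$, each $\hat f_\xi$ is a well-defined element of $L^2(\mathbb{R}^{n-1})$ supported in the fixed compact subset of $\Sigma$ obtained by projecting $K$, so every integral below is over a compact set. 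Applying $\mathcal{F}_{y_n}$ to \eqref{R_C} and using Fubini,
\[
\mathcal{F}_{y_n}[Rf](s,\vy',\xi)=\sqrt{1+s^2}\int_0^\infty e^{-ist\xi}\,(M\hat f_\xi)(t,\vy')\,\dd t,
\]
where $(M\hat f_\xi)(t,\vy')=\int_{S^{n-2}}t^{n-2}\hat f_\xi(\vy'+t\Theta)\,\mathrm{d}S^{n-2}$ is the spherical Radon transform of $\hat f_\xi$ over the sphere of radius $t$ centered at $\vy'\in Q$, and $t\mapsto(M\hat f_\xi)(t,\vy')$ has support in a bounded subinterval of $(0,\infty)$.

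Now assume $Rf=0$, so $\mathcal{F}_{y_n}[Rf]\equiv 0$. Fix $\xi\neq 0$ and a center $\vy'\in Q$, and extend $t\mapsto(M\hat f_\xi)(t,\vy')$ by zero to $t\leq 0$, calling the result $m\in L^1(\mathbb{R})$. Then the displayed identity says $\sqrt{1+s^2}\,\widehat m(\xi s)=0$ for (a.e.) $s\in\mathbb{R}$; since $\widehat m$ is continuous and $\xi\neq0$, this forces $\widehat m\equiv 0$ and hence $m=0$. Thus, for every $\xi\neq0$, $M\hat f_\xi$ vanishes on all spheres centered on $Q$. Because $Q=\partial\Sigma$ is real-analytic, $\Sigma$ is convex, and $\hat f_\xi$ is compactly supported in $\Sigma$, the injectivity of the spherical Radon transform with centers restricted to $Q$ --- which is a known result, see \cite{Q2006:supp} --- gives $\hat f_\xi=0$ for every $\xi\neq0$. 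Hence $\mathcal{F}_{x_n}f$ vanishes a.e., and $f=0$ by Fourier inversion in $x_n$.

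I expect the main obstacle to be matching the hypotheses of Theorem \ref{thm:inversion1} with the cited spherical-transform result: one must confirm that ``spheres centered on the real-analytic convex hypersurface $Q=\partial\Sigma$, acting on functions compactly supported in $\Sigma$'' is precisely an injectivity statement proved in \cite{Q2006:supp}. For bounded convex $\Sigma$ this is classical (for instance the circular transform in the plane, where the boundary of a bounded convex domain is an injectivity set), whereas the unbounded convex and higher-dimensional cases are where the real-analyticity assumption does real work, via microlocal analyticity arguments. The remaining steps --- justifying the interchange of integration in the Fourier computation and the one-dimensional Fourier uniqueness --- are routine consequences of the compact support of $f$, and in particular one never needs the Volterra-equation machinery required for the general surface-of-revolution case.
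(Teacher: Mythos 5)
Your proposal is correct and follows essentially the same route as the paper: Fourier transform in $y_n$, recognition that the data give the one-dimensional Fourier transform of $t\mapsto M\hat f_\xi(t,\vy')$ (which you kill by Fourier uniqueness where the paper writes the explicit inversion in $s'=s\xi$), then the injectivity of the spherical transform with centers on the convex real-analytic boundary $Q$ applied to $\hat f_\xi$ supported in $\Sigma$ (the paper cites the analytic microlocal regularity result of \cite{MSaloTzouSupport} together with the argument of \cite[Corollary 3.2]{Q2006:supp} for exactly this step). The only cosmetic difference is the treatment of $\xi=0$: you discard it as a null set in $L^2$, while the paper uses entire analyticity of $\xi\mapsto\hat f_\xi$ from compact support; both are fine.
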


\begin{proof}
After taking the Fourier transform in $y_n$ on both sides of \eqref{R_C}, we have,
\begin{equation}
\widehat{Rf}(s,\vy',\xi)  = \sqrt{1+s^2}\int_{0}^{\infty} e^{-i st\xi} M\hat{f}_{\xi}(t,\vy')\mathrm{d}t
\end{equation}
where $\hat{f}_\xi$ is the Fourier transform in the $x_n$ component evaluated at $\xi$. Let $s' = s\xi$. Then, for $\xi \neq 0$, we have
\begin{equation}
\label{4.9}
M\hat{f}_{\xi}(t,\vy') = \frac{1}{2\pi}\int_{-\infty}^{\infty} \frac{ \widehat{Rf}\paren{\frac{s'}{\xi},\vy',\xi} }{\sqrt{1+\paren{\frac{s'}{\xi}}^2}} e^{i s' t} \mathrm{d}s'
\end{equation}
for all $t\geq 0$ and $\vy' \in Q$. We have now established a link between cone transform \eqref{R_C} and the spherical Radon transform. 

Let $Rf = 0$. Then, by \eqref{4.9}, $M\hat{f}_{\xi}(t,\vy') = 0$
for any $t\geq 0$, $\vy'\in Q$, and $\xi \neq 0$.  Furthermore,
$\hat{f}_\xi$ is supported in $\Sigma$ and its boundary, $Q$ is
convex and real-analytic. This, plus
results of \cite{MSaloTzouSupport} on analytic regularity under the
Bolker assumption, and the arguments in the proof of \cite[Corollary
3.2]{Q2006:supp} for the spherical transform show that, since $M\hat{f}_{\xi} = 0$ for all
centers on $Q$,  $\hat{f}_{\xi}=0$, for any $\xi \neq 0$.
As $f$ is of compact support, the $\xi$ component of $\hat{f}_{\xi}$,
is an entire analytic function, and thus $\hat{f}_{\xi}=0$ for all
$\xi\in\mathbb{R}$. Therefore, $f = 0$, and this completes the proof.
\end{proof}


\begin{discussion}
The function, $\widehat{Rf}(s'/\xi,\vy',\xi)/\sqrt{1+(s'/\xi)^2}$, is
the Fourier transform of $M\hat{f}_{\xi}$ in the $t$ variable at
value $s'$. If $f$ is compactly supported, then
$M\hat{f}_{\xi}(t,\vy')$ is a compactly supported function of $t$ for
any fixed $\vy'$. Thus, the Fourier transform in $t$ is an entire
analytic function and, by analytic continuation, we need only the
$s\in \Omega$, for some open $\Omega \subset \mathbb{R}$, to
determine $\widehat{Rf}(s'/\xi,\vy',\xi)/\sqrt{1+(s'/\xi)^2}$
everywhere. Therefore, we need only that ${Rf}(s,\vy)
= 0$ for $s\in \Omega$, and $\vy'\in Q$, to show $f=0$.


In cases when $M$ has an explicit left inverse, $M^{-1}$, e.g., when $Q$ is a sphere and $S$ is cylinder, and we know $Rf$ for all $s\in \mathbb{R}$, and $\vy \in S$, then, using \eqref{4.9}, we have the explicit expression for $f$ in terms of $Rf$
\begin{equation}
f = \frac{1}{2\pi}\mathcal{F}^{-1}_{\xi}M^{-1}\paren{\int_{-\infty}^{\infty} \frac{ \widehat{Rf}\paren{\frac{s'}{\xi},\vy',\xi} }{\sqrt{1+\paren{\frac{s'}{\xi}}^2}} e^{i s' t} \mathrm{d}s'},
\end{equation}
where $\mathcal{F}^{-1}_{\xi}$ is the inverse Fourier transform in the $\xi$ variable.
\end{discussion}

In this section, we presented injectivity results for $R_{\mu}$, and for $R$ in the case when $h(s,x) = sx$. The above discussion provides an explicit inverse for the cone transform in the special case when $M$ has an explicit left inverse. In the next section, we consider when $h$ is an even function with a single maximum,
as well as a generalisation of this case.

\subsection{Analytic inversion in the case of symmetric curves}

In this section, we provide a general inversion framework for surface
of revolution Radon transforms where the surfaces have some symmetry.
Throughout this section, we let \[B^n_{r}(\vy) = \{\vx \in
\mathbb{R}^n : |\vx-\vy|< r\}\]denote the open unit ball in $\mathbb{R}^n$ of radius $r$. We will consider the same geometry as in section \ref{bolker} but will reformulate the operator and add some hypotheses on the function $h$. Indeed, we suppose that $C = \{s>0\}$ and for every $s$,  $\Omega_{h,s} = \{x \ : \ (x,s) \in \Omega_h\}$ is a symmetric interval around the origin, that $h(s,\cdot)$ is an even function on this interval, that $h\rightarrow 0$ at the boundaries of this interval, that $h(s,\cdot)$ takes a single maximum at $x = 0$ where the second derivative $h_{xx}$ does not vanish, that otherwise the $h_x$ does not vanish, and that $h(s,0)$ is an increasing function with $h(s,0) \rightarrow 0$ as $s\rightarrow 0$. There are several examples of practical interest where $h$ satisfies these hypotheses and we discuss them in section \ref{examples}. In this case, equation \ref{eq:Radon} is equivalent to
\begin{equation}\label{eq:Radon2}
Rf(s,
\vy) = \int_{\Omega_{h,s}}  \int_{\Theta \in S^{n-2}} h(s,x)^{\frac{n-2}{2}} f( \sqrt{h}(s,x) \Theta + \vy', y_n+x) \sqrt{1+ \frac{h_x^2}{4h}}\ \dd S^{n-2}\dd x
\end{equation}
where $\dd S^{n-2}$ is the surface measure on $S^{n-2}$. By the hypotheses on $h$, $\Omega_{h,s}$ can be split into two intervals $(-b(s),0)$ and $(0,b(s))$, and $h(s,\cdot)$ is invertible on each of these intervals. In fact, because $h(s,\cdot)$ is even there will be a function $\mu(s,\cdot): (0,h(s,0)) \rightarrow (0,b(s))$ such that
\[
h(s,\mu(s,t)) = h(s,-\mu(s,t)) = t^2.
\]
Then, changing coordinates $x = \pm\mu(s,t)$ in the inner integral of \eqref{eq:Radon2}, we have
\[
Rf(s,\vy) =\sum_{k=0}^1 \int_{0}^{\sqrt{h}(s,0)}   \int_{\Theta \in S^{n-2}} t^{n-2} f\big (t \Theta+\vy',(-1)^k\mu(s,t)+y_n\big ) \sqrt{1+ \mu^2_t} \ \dd S^{n-2}\dd t.
\]
By making a change of variables $\tilde{s} = \sqrt{h}(s,0)$, which is possible by the hypotheses, this becomes
\begin{equation}\label{eq:Radon3}
Rf(\tilde{s},\vy) =  \sum_{k=0}^1 \int_{0}^{\tilde{s}} \int_{\Theta \in S^{n-2}} t^{n-2} f\big (t \Theta+\vy',(-1)^k\mu(s,t)+y_n\big ) \sqrt{1+ \mu^2_t} \ \dd S^{n-2}\dd t.
\end{equation}
We will show how to invert a transform slightly more general than \eqref{eq:Radon3} by first taking the Fourier transform in the $y_n$ variable. Our generalisation is the following.

Let $\mu_j = \mu_j(s,t) \in C(\mathcal{T})$, for $1\leq j \leq m$, be a set of functions which define $m$ one-parameter families of curves, where $\mathcal{T} = \{(s,t) : a\leq t \leq s \leq b\}$, for some $b>a\geq 0$. Then, we define the Radon transform
\begin{equation}
\begin{split}
\label{R_mu}
R_{\mu}f(s,\vy) = \sum_{k=0}^1\sum_{j=1}^m\int_{a}^s g_j\int_{\Theta\in
S^{n-2}}t^{n-2}f\paren{ t\Theta +
\vy',(-1)^k\mu_j(s,t)+y_n } \mathrm{d}S^{n-2}\mathrm{d}t,
\end{split}
\end{equation}
where $g_j  = g_j(s,t) = \sqrt{1+\mu_{jt}^2}$, $\mu_{jt} = \frac{\mathrm{d}}{\mathrm{d}t}\mu_j$. 
The transform, $R_{\mu}$, maps $f$ to its integrals over the surfaces of revolution of symmetric continuous curves defined by the $\mu_j$ and is equal to $R$ defined in section \ref{bolker}, as can be seen in \eqref{eq:Radon3}, for the case when $m=1$, $f=0$ on $\cup_{\vs\in S}B^n_{a}(\vs)$ (i.e., $f$ is zero up to distance $a$ from $S$), and $h$ is even in $x$.
For example, when $f=0$ on $\cup_{\vs\in S}B^n_{a}(\vs)$, $m=1$ and $\mu_1(s,t) = \sqrt{s^2-t^2}$, and $h(s,x) = s^2-x^2$,
then $R_{\mu}f = Rf$ defines the integrals of $f$ over spheres, radii
$s$ with center $\vy \in S$. The reason we introduce the $\mu_j$ in
this section to define the surfaces of rotation, is for elegance and
ease of calculation in the proofs of our main injectivity theorem
(Theorem \ref{thm:inversion}), presented later in this section. We take a sum of $m$ integrals in \eqref{R_mu} to keep the discussion more general. The $m=1$ case is of the most practical interest, and later, in section \ref{examples}, we discuss further examples of $\mu_j$ and $h$, and how they apply to CST, ECST, and URT.

Taking the Fourier transform in $y_n$ on both sides of \eqref{R_mu}, yields
\begin{equation}
\label{volt_0}
\begin{split}
\widehat{R_{\mu}f}(s,\vy',\xi) &= \int_a^s\sum_{j=1}^m g_j\cos(\xi \mu_j) \left[ \int_{\Theta\in S^{n-2}}t^{n-2}\hat{f}_{\xi}\paren{ t\Theta + \vy'}\mathrm{d}S^{n-2} \right] \mathrm{d}t\\
& = \int_a^s\tilde{K}_{\xi}(s,t)M\hat{f}_{\xi}(t,\vy') \mathrm{d}t,\\
\end{split}
\end{equation}
where
\begin{equation}
\tilde{K}_{\xi}(s,t) = \sum_{j=1}^m g_j(s,t) \cos\paren{ \xi \mu_j(s,t) },
\end{equation}
$\xi$ is dual to $x_n$, $\hat{f}_{\xi}(\vx') = \hat{f}(\vx',\xi)$, and $M$ is the spherical Radon transform introduced in section \ref{cone_formula}. 
Equation \eqref{volt_0} is a Volterra equation of the first kind,
which we aim to solve for $M\hat{f}_{\xi}$. Then we use known results
on $M$ \cite{Q2006:supp, MSaloTzouSupport} to derive injectivity conditions on $R_{\mu}$.

We now have our theorem, which provides injectivity conditions on $R_{\mu}$.

\begin{theorem}
\label{thm:inversion}  
Let $b>a\geq 0$, and let $\mathcal{T} = \{(s,t) : a\leq t \leq s \leq b\}$. Suppose that
$Q$ is the boundary of an open convex set $\Sigma \subset \mathbb{R}^{n-1}$. Let $f\in L^2_c(\Sigma \times \mathbb{R})$.

Consider the conditions
\begin{enumerate}
\item\label{(0)}
$f=0$ at all points of distance less than $a$ from $S$.

\item\label{(1)} $\mu_j$ is of the form $\mu_j(s,t) = \sqrt{s-t}\cdot \tau_j(s,t)$, where $\tau_j \in C^{\infty}(\mathcal{T})$, and $\tau_j(s,s) \neq 0$ for $s \in [a,b]$;
\item\label{(2)} $S$ is a real-analytic manifold. 
\end{enumerate}
Under \eqref{(1)}, \eqref{volt_0} is uniquely solvable. Under \eqref{(0)},
\eqref{(1)} and \eqref{(2)}, $R_{\mu}f =
0$ on $[a,b]\times S$ implies $f = 0$ 
at all points of distance less than $b$ from $S.$
\end{theorem}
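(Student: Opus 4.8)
The plan is to reduce the theorem to two separate ingredients: (i) a Volterra-equation argument that recovers $M\hat f_\xi$ from $\widehat{R_\mu f}$ under hypothesis \eqref{(1)}, and (ii) a support/injectivity argument for the spherical transform $M$ that, combined with (i), forces $f=0$ near $S$ under all three hypotheses. The identity \eqref{volt_0} already reformulates $\widehat{R_\mu f}(s,\vy',\xi) = \int_a^s \tilde K_\xi(s,t) M\hat f_\xi(t,\vy')\,\dd t$, so everything is driven by the kernel $\tilde K_\xi(s,t) = \sum_{j=1}^m g_j(s,t)\cos(\xi\mu_j(s,t))$.

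First I would analyze the kernel near the diagonal $s=t$. Under \eqref{(1)}, $\mu_j(s,t) = \sqrt{s-t}\,\tau_j(s,t)$, so $\mu_{jt} = \frac{\partial}{\partial t}\big(\sqrt{s-t}\,\tau_j\big)$ blows up like $-\tfrac12(s-t)^{-1/2}\tau_j(s,s)$ as $t\to s$; hence $g_j = \sqrt{1+\mu_{jt}^2}$ behaves like $\tfrac12(s-t)^{-1/2}|\tau_j(s,s)|$, which is integrable in $t$. Meanwhile $\cos(\xi\mu_j(s,s)) = \cos 0 = 1$. So $\tilde K_\xi(s,t)$ has an integrable singularity of Abel type $\sim c(s)(s-t)^{-1/2}$ at $t=s$ with $c(s) = \tfrac12\sum_j|\tau_j(s,s)| \neq 0$ (using $\tau_j(s,s)\neq 0$; if some $\tau_j(s,s)$ could have opposite signs one should be slightly careful, but $g_j>0$ always, so the leading coefficients add and never cancel). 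This is precisely the structure of a generalized Abel/Volterra integral equation of the first kind: writing $K_\xi(s,t) = (s-t)^{1/2}\tilde K_\xi(s,t)$, the function $K_\xi$ extends to a continuous (indeed smooth, by \eqref{(1)} with $\tau_j\in C^\infty$) function on $\mathcal T$ with $K_\xi(s,s) = c(s)\neq 0$. Standard Volterra theory — e.g. the Abel-inversion/fractional-integration technique in \cite{Tricomi} — then shows that \eqref{volt_0} is uniquely solvable for $M\hat f_\xi(\cdot,\vy')$ on $[a,b]$ for each fixed $\vy'\in Q$ and each $\xi$; concretely one applies a fractional integral operator of order $1/2$ to both sides to convert it to a second-kind Volterra equation with bounded kernel, which has a unique solution by Picard iteration. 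This establishes the first conclusion.

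Next, assume in addition \eqref{(0)} and \eqref{(2)} and that $R_\mu f = 0$ on $[a,b]\times S$. By the unique solvability just proved, $M\hat f_\xi(t,\vy') = 0$ for all $t\in[a,b]$, all $\vy'\in Q$, and all $\xi$. By \eqref{(0)}, $\hat f_\xi$ vanishes on the union of balls $\bigcup_{\vy'\in Q} B^{n-1}_a(\vy')$, so the spheres of radius $t<a$ centered on $Q$ carry no mass anyway, and the data $M\hat f_\xi(t,\vy')=0$ for $t\le b$ says the integrals of $\hat f_\xi$ over all spheres centered on $Q$ with radius up to $b$ vanish. Now $\hat f_\xi$ is supported in $\overline\Sigma$, $Q=\partial\Sigma$ is convex and real-analytic, and the spherical transform with centers on $Q$ satisfies the Bolker condition; by the microlocal analyticity results of \cite{MSaloTzouSupport} together with the support-theorem argument of \cite[Corollary 3.2]{Q2006:supp} — exactly as invoked in the proof of Theorem \ref{thm:inversion1} — one concludes $\hat f_\xi = 0$ on the set of points within distance $b$ of $Q$, i.e. within distance $b$ of $S$ after reinstating the $x_n$ variable. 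Finally, since $f$ has compact support, $\xi\mapsto \hat f_\xi(\vx')$ is entire for each $\vx'$; vanishing of $\hat f_\xi(\vx')$ for all $\xi\in\mathbb R$ at each such $\vx'$ gives $f=0$ at all points of distance less than $b$ from $S$, which is the claim.

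The main obstacle I anticipate is making the Volterra/Abel inversion rigorous and uniform in $\xi$: the kernel $\tilde K_\xi$ depends on $\xi$ through the oscillatory factors $\cos(\xi\mu_j)$, and while the diagonal behavior (hence solvability for each fixed $\xi$) is robust, one should check that the solution operator is well-defined and that no hidden degeneracy in $c(s)$ occurs (this is where $\tau_j(s,s)\neq 0$ and $g_j>0$ are used — the leading Abel coefficients are strictly positive and add). A secondary point requiring care is the precise form of the support theorem needed: we need vanishing of $\hat f_\xi$ on a neighborhood of $Q$ of radius $b$ from vanishing of $M\hat f_\xi$ for radii in $[a,b]$, which is where convexity and real-analyticity of $Q$ (hypothesis \eqref{(2)}) and the Bolker property of $M$ enter, via \cite{Q2006:supp,MSaloTzouSupport}; the reduction to \cite[Corollary 3.2]{Q2006:supp} should be spelled out since that corollary is stated for the classical spherical means and here the centers are constrained to the hypersurface $Q$.
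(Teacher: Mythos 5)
Your proposal is correct and follows essentially the same route as the paper: factor out the Abel-type singularity $(s-t)^{-1/2}$ using hypothesis \eqref{(1)}, observe that the regular part of the kernel is smooth with nonvanishing diagonal values $\tfrac12\sum_j|\tau_j(s,s)|$, solve the first-kind Volterra equation uniquely for $M\hat f_\xi$, and then combine the support hypothesis \eqref{(0)} with the analytic microlocal support arguments of \cite{MSaloTzouSupport} and \cite[Corollary 3.2]{Q2006:supp} to conclude $\hat f_\xi=0$ within distance $b$ of $Q$ for every $\xi$, hence $f=0$ near $S$. The only cosmetic difference is that you invoke the classical Abel/fractional-integration reduction from \cite{Tricomi}, whereas the paper cites \cite[Theorems A and B]{Q1983-rotation} (together with absolute continuity of the data in $s$) for the same Volterra step.
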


\begin{remark}
Assumption \eqref{(1)} gives conditions on $\mu_j$ so that $M\hat{f}_{\xi}$
is uniquely recoverable from $R_{\mu}f$. For $R$ in
\eqref{eq:Radon3}, this first assumption is satisfied because the
second derivative $h_{xx}$ does not vanish at $x = 0$. Assumption
\eqref{(2)} provides injectivity conditions on $M$, as in
\cite[Theorem 2.4]{Q2006:supp}. \eqref{(1)} and \eqref{(2)} combined give injectivity conditions for $R_{\mu}$.
\end{remark}

\begin{remark}
It is possible to prove an extension of Theorem \ref{thm:inversion} for operators without the symmetry condition (i.e. \eqref{R_mu} without a sum in $k$) provided the $\mu_j$ satisfy certain asymptotic compatibility conditions at the boundary of their definition. However, this is not necessary for any of the practical examples given in section \ref{examples}.
\end{remark}

\begin
{proof}[Proof of Theorem \ref{thm:inversion}]
Let us assume \eqref{(1)} holds. Given the specific form of the $\mu_j$ specified in \eqref{(1)}, we have
\begin{equation}
\begin{split}
g_j(s,t) &= \sqrt{1+\mu_{jt}^2}\\
&= \frac{\sqrt{(s-t)+\paren{(s-t)\tau_{jt} - \frac{1}{2}\tau_j}^2}}{\sqrt{s-t}}\\
&= \frac{\kappa_j(s,t)}{\sqrt{s-t}}.
\end{split}
\end{equation}


We have, $\kappa_j(s,s) = \frac{1}{2}|\tau_j(s,s)|>0$ for $s\in[a,b]$, and $\kappa_j(s,t)>0$, when $s\neq t$ and $(s,t) \in \mathcal{T}$. Therefore, since $\kappa_j$ is continuous and $\mathcal{T}$ is compact, $\kappa_j$ is bounded away from zero on $\mathcal{T}$ and $\kappa_j \in C^{\infty}(\mathcal{T})$.

Now, by \eqref{volt_0}, we have
\begin{equation}
\label{volt_1}
\begin{split}
\widehat{R_{\mu}f}(s,\vy',\xi) &= \int_a^s\frac{\sum_{j=1}^m \kappa_j\cos(\xi \mu_j) }{\sqrt{s-t}}\left[ \int_{\Theta\in S^{n-2}}t^{n-2}\hat{f}_{\xi}\paren{ t\Theta + \vy'}\mathrm{d}S^{n-2} \right] \mathrm{d}t\\
& = \int_a^s\frac{K_{\xi}(s,t)}{\sqrt{s-t}}M\hat{f}_{\xi}(t,\vy') \mathrm{d}t,\\
\end{split}
\end{equation}
where
\begin{equation}
K_{\xi}(s,t) = \sum_{j=1}^m \kappa_j(s,t) \cos\paren{ \xi \mu_j(s,t) }.
\end{equation}
We have
\begin{equation}
K_{\xi}(s,s) = \frac{1}{2}\sum_{j=1}^m |\tau_j(s,s)| > 0
\end{equation}
for $s \in [a,b]$, by \eqref{(1)}, and
\begin{equation}
\begin{split}
\frac{\mathrm{d}}{\mathrm{d}s} \cos(\xi \mu_j) &= -\xi\cdot \frac{(s-t)\tau_{js} - \tau_j}{\sqrt{s-t}}\sin(\xi \mu_j)\\
&= \phi_j \sinc(\xi \mu_j) \in C(\mathcal{T}),
\end{split}
\end{equation}
where $\phi_j = \xi^2\tau_j\paren{ (s-t)\tau_{js}-\tau_j } \in C^{\infty}(\mathcal{T})$. Thus, $\frac{\mathrm{d}}{\mathrm{d}s} K_{\xi} \in C(\mathcal{T})$, given also the smoothness of the $\kappa_j$. Further, since $M\hat{f}_{\xi} \in L^2\paren{\mathbb{R}_+\times \mathbb{R}^{n-1}}$, $\widehat{Rf}(s,\vy',\xi)$ is an absolutely continuous function of $s$, by \cite[Lemma 3.3]{Q1983-rotation}. Therefore, by \cite[Theorems A and B]{Q1983-rotation}, we can now solve the Volterra equation of the first kind \eqref{volt_1} uniquely for $M\hat{f}_{\xi}(t,\vy')$, for all $t\in[a,b]$, $\vy' \in \mathbb{R}^{n-1}$, and $\xi\in \mathbb{R}$. 

 To finish the proof, let us assume $Rf = 0$, and that \eqref{(1)} and
\eqref{(2)} hold. Using the above calculations, $Rf = 0$ implies
$M\hat{f}_{\xi}(t,\vy') = 0$ for any $t\in[a,b]$, $\vy'\in Q$, and
$\xi \in \mathbb{R}$. As $\supp(f)$ is at least distance $a$
from $S$, $M\hat{f}_{\xi}(t,\vy)=0$ for all $(t,\vy)\in [0,b)\times
Q$ and  $\hat{f}_\xi$ is supported in $\Sigma$.  Since, in addition, $Q$ is a convex
real-analytic manifold by assumption \eqref{(2)}, we can use the
analytic regularity theorem in \cite{MSaloTzouSupport} under the
Bolker assumption and the arguments in the proof of \cite[Corollary
3.2]{Q2006:supp} to show that $\hat{f}_{\xi}=0$ on $\cup_{\vy'\in
Q}B^{n-1}_{b}(\vy')$, for any $\xi \in \mathbb{R}$. Therefore, $f = 0$
on $\cup_{\vs\in S}B^n_{b}(\vs)$. This completes the proof.
\end{proof}



\begin{remark}
Theorem \ref{thm:inversion} establishes a key link between $R_{\mu}$
and the spherical Radon transform, on which there exists a wealth of
literature on the inversion properties. After taking the Fourier
transform in $y_n$ and inverting a 1-D Volterra operator in the first
stages of the proof of Theorem \ref{thm:inversion}, $R_{\mu}$ reduces
to a spherical Radon transform, $M$, which is applied to slices in Fourier space, $\hat{f}_{\xi}$. In special cases, this connection to the spherical transform can be used to derive methods for inverting $R_{\mu}$. For example, if $Q = S^{n-2}$ is a sphere and $f$ is compactly supported on the interior of $S = Q\times \mathbb{R}$, then $\hat{f}_{\xi}$ is compactly supported on the interior of $Q$, for any $\xi\in\mathbb{R}$, and $\hat{f}_{\xi}$ can be recovered explicitly from $M\hat{f}_{\xi}$ using the formulae of \cite{kunyansky2007explicit}.
\end{remark}

\section{Example applications}
\label{examples}
In this section, we give some example $h$ and $\mu_j$ which have applications in CST, URT, and ECST. We only consider here the conditions on $h$ and $\mu$ given in Theorem \ref{thm:bolker}, and Theorem \ref{thm:inversion}, respectively, which are needed in order for the Bolker condition and injectivity to hold. For the Bolker condition to hold in any of the examples given below, $\text{supp}(f)$ must  be bounded away from $S$, and $S$ must also satisfy condition \ref{tangent plane} of Theorem \ref{thm:bolker}.  For injectivity to hold, we need $\text{supp}(f)$ to be bounded away from $S$, $S$ to  be real-analytic, and $Q$ to be the boundary of a convex set, as specified in Theorem \ref{thm:inversion}.

\subsection{Elliptic arcs with fixed linear eccentricity}
\label{example:elliptic} In this example, we consider the case when
$h$ defines an elliptic arc, and $\mathcal{R}(s,\vy)$ is an ellipsoid
of revolution (or spheroid). Spheroid integral surfaces have
applications in URT \cite{WHQ}, and seismic imaging
\cite{grathwohl2020imaging}. For example, in URT, the foci of the
spheroids represent sound wave emitters and receivers, and the sound
wave travel time determines the spheroid radii. We consider the
special case when the foci of the spheroid are constrained to lie on
$S$, and the linear eccentricity of the spheroid, $c$, is fixed. This
case is of interest in the URT literature \cite{ABKQ2013} as well.

Let
\begin{equation}
\label{h_ellipse}
h(s,x) = \frac{s^2}{s^2+c^2}(s^2+c^2 - x^2) \in C^{\infty}\paren{ \Omega_h },
\end{equation}
where $s$ is the minor ellipse radius, $c$ is the fixed linear eccentricity, and $\Omega_h = \{(s,x) : s>0, -\sqrt{s^2+c^2}<x<\sqrt{s^2+c^2}\}$. Then, for fixed $s>0$, $x\to \sqrt{h}(s,x)$ defines an elliptic arc. See figure \ref{ellipses}, where we plot some example elliptic arc curves when $c=2$. 

We now aim to show that $h$ satisfies the conditions of Theorem \ref{thm:bolker}. First, it is clear that $h\to 0$ at the boundary of $\Omega_h$. Also,
\begin{equation}
h_s(s,x) = \frac{ s \paren{ c^4 - c^2(x^2-2s^2)+s^4 } }{ (s^2+c^2)^2 },
\end{equation}
which is non-zero on $\Omega_h$. Now, we have
\begin{equation}
\frac{h_{x}}{h_s} = -\frac{s x (s^2+c^2)}{c^4 - c^2(x^2-2s^2)+s^4},
\end{equation}
and it follows that
\begin{equation}
\frac{\mathrm{d}}{\mathrm{d}x}\paren{\frac{h_{x}}{h_s}} = -s(s^2+c^2)\frac{c^4 + c^2(x^2+2s^2)+s^4}{\paren{c^4 - c^2(x^2-2s^2)+s^4}^2 },
\end{equation}
which is non-zero on $\Omega_h$. Thus, the conditions of Theorem \ref{thm:bolker} are satisfied.

\begin{figure}
\centering
\begin{subfigure}{0.3\textwidth}
\includegraphics[width=0.9\linewidth, height=3.2cm, keepaspectratio]{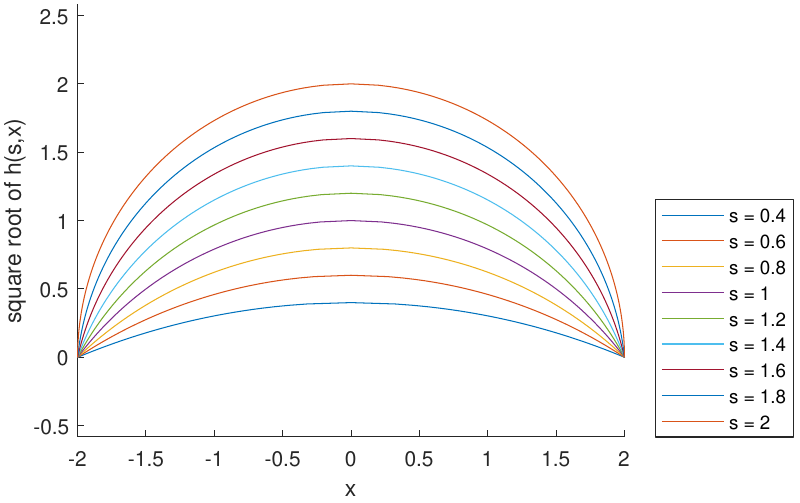}
\subcaption{circular arcs ($\alpha = 2$)} \label{arcs}
\end{subfigure}
\begin{subfigure}{0.3\textwidth}
\includegraphics[width=0.9\linewidth, height=3.2cm, keepaspectratio]{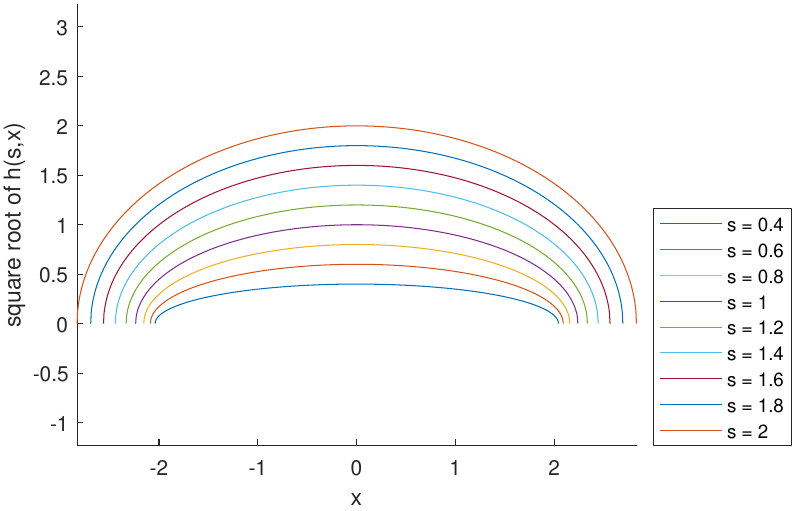}
\subcaption{elliptic arcs ($c = 2$)} \label{ellipses}
\end{subfigure}
\begin{subfigure}{0.3\textwidth}
\includegraphics[width=0.9\linewidth, height=3.2cm, keepaspectratio]{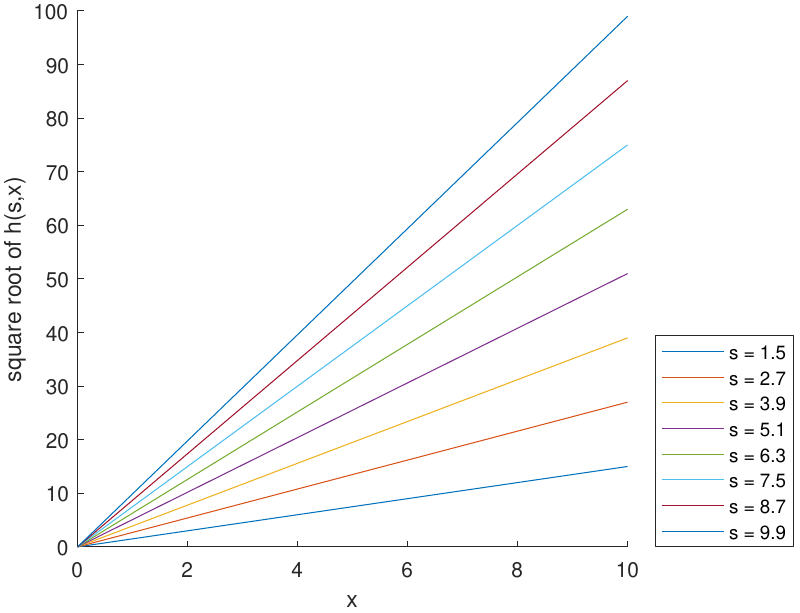}
\subcaption{lines} \label{lines}
\end{subfigure}
\caption{Example $\sqrt{h}$ mappings which define circular and elliptic arcs, and straight line curves.}
\label{F_curve}
\end{figure}

Let
$$\mu(s,t) = \sqrt{1+\paren{\frac{c}{s}}^2} \sqrt{s^2-t^2}.$$
Then, if $h$ is defined as in \eqref{h_ellipse}, $f = 0$ on $\cup_{\vs\in S}B^n_{a}(\vs)$, and $m=1$, with $\mu_1=\mu$, $Rf = R_{\mu}f$. We now aim to show the conditions on $\mu$ specified in Theorem \ref{thm:inversion} hold. We have $\mu(s,t) = \sqrt{s-t}\cdot \tau(s,t)$, where
$$\tau(s,t) = \sqrt{1+\paren{\frac{c}{s}}^2} \sqrt{s+t}.$$
Let $b>a>0$. Then, $\tau$ is smooth on $\mathcal{T} = \{(s,t) : a\leq s \leq b, a\leq t \leq s\}$. Further,
$$\tau(s,s) = \sqrt{2}\sqrt{s}\sqrt{1+\paren{\frac{c}{s}}^2} \neq 0$$
for $s\in [a,b]$. Thus, the conditions of Theorem \ref{thm:inversion} are satisfied when $m=1$, and $\mu_1 = \mu$.

\subsection{Circular arcs}\label{sect:circular}
In this example, we consider the case when $h$ defines a circular arc curve, and $\mathcal{R}(s,\vy)$ is the surface of revolution of a circular arc, which we call a lemon. A lemon is also the interior part of a spindle torus (see \cite[figure 5]{rigaud20183d}), which is a special kind of torus that self-intersects. Lemon integral surfaces have applications in CST \cite{rigaud20183d}. A lemon self-intersects at two points, which we will call the ``tips" of the lemon. In CST, the tips of the lemon represent source and detector positions, and the scattered photon energy determines the radius of the lemon. We consider the special case here where the distance between the tips of the lemon, $\alpha$, is fixed.

Let
\begin{equation}
\label{h_arc}
h(p,x) = \paren{\sqrt{\alpha^2+p^2-x^2} - p}^2 \in C^{\infty}\paren{ \Omega_h },
\end{equation}
where $\Omega_h = [0,\infty) \times (-\alpha,\alpha)$. Then, $x\to \sqrt{h}(p,x)$, for fixed $p\in [0,\infty)$, defines a circular arc. See figure \ref{arcs} for an illustration of circular arc curves when $\alpha = 2$. In figure \ref{arcs}, $s$ relates to $p$ via $s = \sqrt{\alpha^2+p^2}-p$.

We now aim to show the conditions of Theorem \ref{thm:bolker} are satisfied. 
By \eqref{h_arc}, it is clear $h\to 0$ on the boundary of $\Omega_{h,p} = \{(p',x)\in \mathbb{R}^2 : p = p'\}\cap \Omega_h$, for any $p\in [0,\infty)$. We have,
\begin{equation}
h_p(p,x) = 2\paren{\frac{p}{\sqrt{\alpha^2+p^2-x^2}}-1}\paren{\sqrt{\alpha^2+p^2-x^2} - p},
\end{equation}
which is strictly less than zero on $\Omega_h$. Now,
\begin{equation}
\frac{h_{x}}{h_p} = \frac{x}{\sqrt{\alpha^2+p^2-x^2}-p},
\end{equation}
and
\begin{equation}
\frac{\mathrm{d}}{\mathrm{d}x}\paren{\frac{h_{x}}{h_p}} = \frac{(p^2+\alpha^2) - p\sqrt{p^2+\alpha^2-x^2}}{\sqrt{p^2+\alpha^2-x^2}\paren{ \sqrt{p^2+\alpha^2-x^2} - p }^2 },
\end{equation}
which is nonzero on $\Omega_h$. To see this, note
\begin{equation}
\begin{split}
(p^2+\alpha^2) - p\sqrt{p^2+\alpha^2-x^2} & \geq (p^2+\alpha^2) - p\sqrt{p^2+\alpha^2} \\
&= \sqrt{p^2+\alpha^2}\paren{ \sqrt{p^2+\alpha^2} - p } > 0.
\end{split}
\end{equation}
Thus, the conditions of Theorem \ref{thm:bolker} are satisfied.

Let
$$\mu(s,t) = \sqrt{s-t} \sqrt{\frac{st+\alpha^2}{s}}.$$
Then, if $h$ is defined as in \eqref{h_arc} and $m=1$, with $\mu_1=\mu$, $Rf(p,\vy) = R_{\mu}f\paren{\sqrt{\alpha^2+p^2}-p,\vy}$, when $p\geq 0$. We will now show that $\mu$ satisfies the conditions of Theorem \ref{thm:inversion}. We have,  $\mu(s,t) = \sqrt{s-t}\cdot \tau(s,t)$, where
$$\tau(s,t) = \sqrt{\frac{st+\alpha^2}{s}}.$$
Let $b>a>0$. Then, $\tau$ is smooth on $\mathcal{T} = \{(s,t) : a\leq s \leq b, a\leq t \leq s\}$. Further,
$$\tau(s,s) = s+\frac{\alpha^2}{s} > 0,$$
for $s\in[a,b]$. Thus, the conditions of Theorem \ref{thm:inversion} are satisfied when $m=1$, and $\mu_1 = \mu$. 

\subsection{Straight lines}
In this example, we consider the case when $h(s,x)=sx$ defines a straight line, gradient $s$, and $\mathcal{R}(s,\vy)$ is a cone. See figure \ref{lines} for some example straight line curves. Cone integral surfaces have applications in ECST and Compton camera imaging \cite{Terzioglu-KK-cone}. In CST, the vertex of the cone corresponds to a scattering location, and the scattered photon energy determines the gradient of the cone, $s$. In this case, the injectivity of $R$ is covered in section \ref{cone_formula}. We aim to prove here that $h$ satisfies the conditions of Theorem \ref{thm:bolker}.

Let $h(s,x) = sx \in C^{\infty}(\Omega_h)$, where $\Omega_h =
\mathbb{R} \times (0,\infty)$.Then, the set $\Omega_{h,s}$ in
Theorem \ref{thm:bolker} assumption \eqref{boundary limit} is $\Omega_{h,s}=(0,\infty)$ and $h(s,\cdot)\to 0$ on the boundary
of $\Omega_{h,s}$, which in this case is $\{0\}$. We have, $h_s(s,x) = x > 0$ on $\Omega_h$. Further, ${h_{x}}/{h_s} = {s}/{x}$, and $\frac{\mathrm{d}}{\mathrm{d}x}\paren{{h_{x}}/{h_s}} = -s/x^2\neq 0$ on $\Omega_h$. Thus, the conditions of Theorem \ref{thm:bolker} are satisfied.

\section{Simulated image reconstructions}
\label{images}
In this section, we present simulated three-dimensional image reconstructions from integrals over spheres, spheroids, and lemons, which were shown to satisfy the conditions of Theorem \ref{thm:bolker} and Theorem \ref{thm:inversion} in section \ref{examples}. The surface of centers, $S$, we consider here is a cylinder. Throughout this section, $n=3$, $Q = S^1$, and $S = Q\times \mathbb{R} = \{\vx \in \mathbb{R}^3 : \sqrt{x_1^2+x_2^2} = 1\}$. The target functions, $f$, we consider in this section have compact support which is contained within the interior of $S$, and which is bounded away from $S$, which is needed for the microlocal theory of Theorem \ref{thm:bolker}, and the injectivity results of Theorem \ref{thm:inversion} to hold. $S$ is also convex and real-analytic, which is in line with the conditions of Theorem \ref{thm:bolker} and Theorem \ref{thm:inversion}. In all simulations conducted in the section, the conditions of Theorem \ref{thm:bolker} and Theorem \ref{thm:inversion} are satisfied.

\subsection{Data simulation}
\label{sim:sect}
The data is simulated using the exact model \eqref{R_mu}, with $m=1$. The definition of $\mu_1$ changes based on the integral surface, e.g., for spherical integrals we set $\mu_1(s,t) = \sqrt{s^2-t^2}$. Let $V_{\xi}$ be the Volterra operator of \eqref{volt_0}, and let $\mathcal{F}_3$ denote the partial Fourier transform in the $x_3$ variable. Then we simulate data as
\begin{equation}
\label{sim}
R_{\mu}f = \mathcal{F}^{-1}_3V_{\xi}M\mathcal{F}_3 f,
\end{equation}
where $M$ is applied to each Fourier slice, $\hat{f}_{\xi}$, of $f$, as in \eqref{volt_0}. We simulate $R_{\mu}f(s,\vy)$ for $s\in [0.2,2.2]$, and $\vy = (\cos\theta,\sin\theta,y_3) \in S$, where $\theta \in[0,2\pi]$, and $y_3 \in[-5,5]$.  For spheres, $s$ is the sphere radius. For spheroids, $s$ is the minor radius, and we set the linear eccentricity, $c=2$, as in figure \ref{ellipses}. For lemons, $s$ is the height of the lemon, and we set the distance between the lemon tips, $\alpha =2$, as in figure \ref{arcs}. After the data is generated as in \eqref{sim}, we add Gaussian noise to $R_{\mu}f$ to simulate noise.

\subsection{Inversion methods}
\label{inv:methods}
We recover $f$ from $R_{\mu}f$ by inverting the sequence of operators on the right-hand side of \eqref{sim}. Each operator is discretized and $f$ is recovered on an $N\times N\times N$ pixel grid. Throughout this section, we set $N =101$.

To apply and invert $\mathcal{F}_3$, we use the Fast Fourier Transform (FFT). To invert $V_{\xi}$, we use Tikhonov regularization, and the ``backslash" function in Matlab. This is possible as each Volterra operator, $V_{\xi}$, is one-dimensional, and hence the  discretized form is a small (in this case $N\times N$) matrix, which is stored in Matlab. We use the Landweber method, and Total Variation (TV) regularization methods to invert $M$. Specifically, to implement TV, we use the Conjugate Gradient Least Squares (CGLS) method in combination with TV denoising. The Landweber method enforces relatively weak regularization, and is included mainly to highlight some of the artifacts we would expect to see as predicted by Theorem \ref{thm:bolker}. The CGLS-TV method is included to show the effects of a more powerful regularizer.

\subsection{Delta function reconstructions}
In this section, we present reconstructions of a delta function, $\delta$, which is located on the interior of $S$. By theorem \ref{thm:bolker}, when we recover $\delta$ from its integrals over spheres, lemons, or spheroids, with centers on $S$, we would expect to see artifacts which are the reflections of $\delta$ in planes tangent to $S$. Given the convexity of $S$ in this case, and as $\delta$ is supported on the interior of $S$, the artifacts due to Bolker should also be constrained to the exterior of $S$. In figure \ref{F1}, we show reconstructions of an example $\delta$ when using the Landweber method to invert $M$, as described in section \ref{inv:methods}. On the left-hand of figure \ref{F1}, we show the artifacts due to Bolker as predicted by Theorem \ref{thm:bolker}. 
\begin{figure}
\centering
\begin{subfigure}{0.24\textwidth}
\includegraphics[width=0.9\linewidth, height=3.2cm, keepaspectratio]{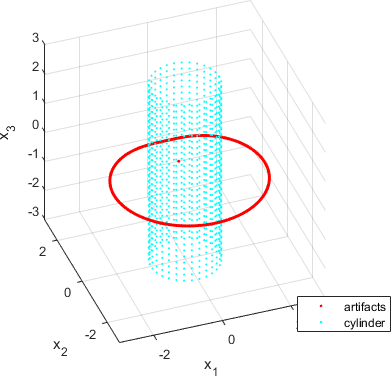}
\end{subfigure}
\begin{subfigure}{0.24\textwidth}
\includegraphics[width=0.9\linewidth, height=3.2cm, keepaspectratio]{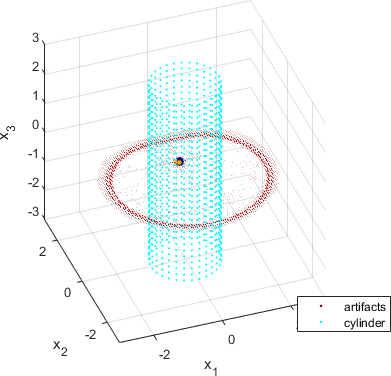}
\end{subfigure}
\begin{subfigure}{0.24\textwidth}
\includegraphics[width=0.9\linewidth, height=3.2cm, keepaspectratio]{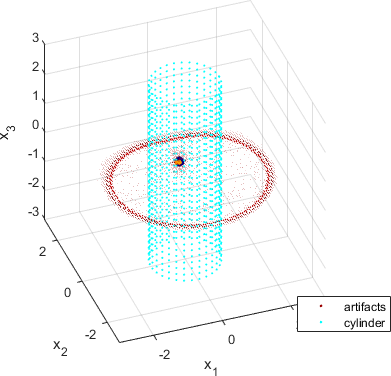}
\end{subfigure}
\begin{subfigure}{0.24\textwidth}
\includegraphics[width=0.9\linewidth, height=3.2cm, keepaspectratio]{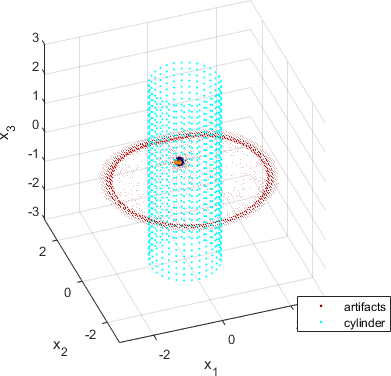}
\end{subfigure}
\begin{subfigure}{0.24\textwidth}
\includegraphics[width=0.9\linewidth, height=3.2cm, keepaspectratio]{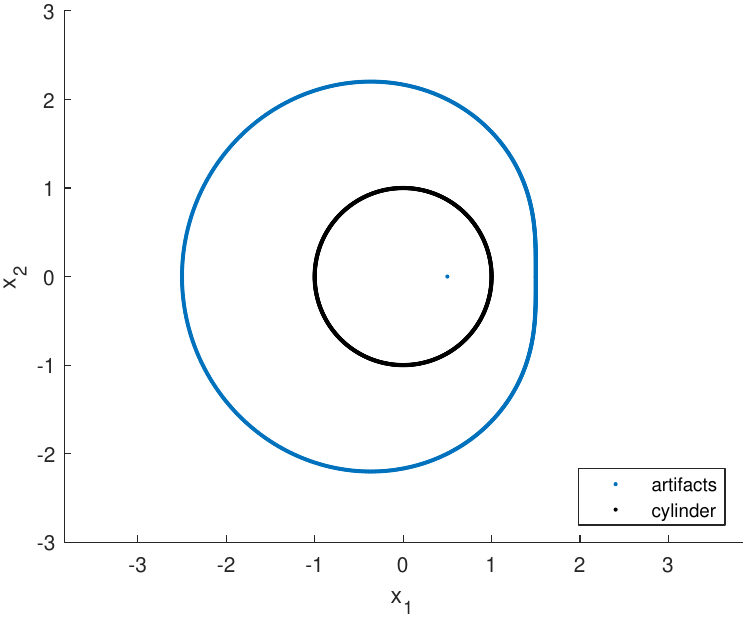}
\subcaption*{predicted}
\end{subfigure}
\begin{subfigure}{0.24\textwidth}
\includegraphics[width=0.9\linewidth, height=3.2cm, keepaspectratio]{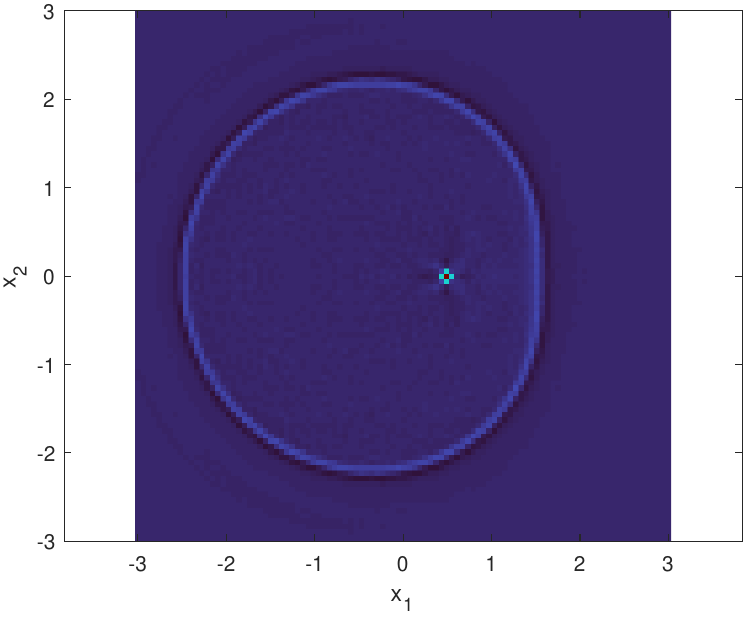}
\subcaption*{sphere}
\end{subfigure}
\begin{subfigure}{0.24\textwidth}
\includegraphics[width=0.9\linewidth, height=3.2cm, keepaspectratio]{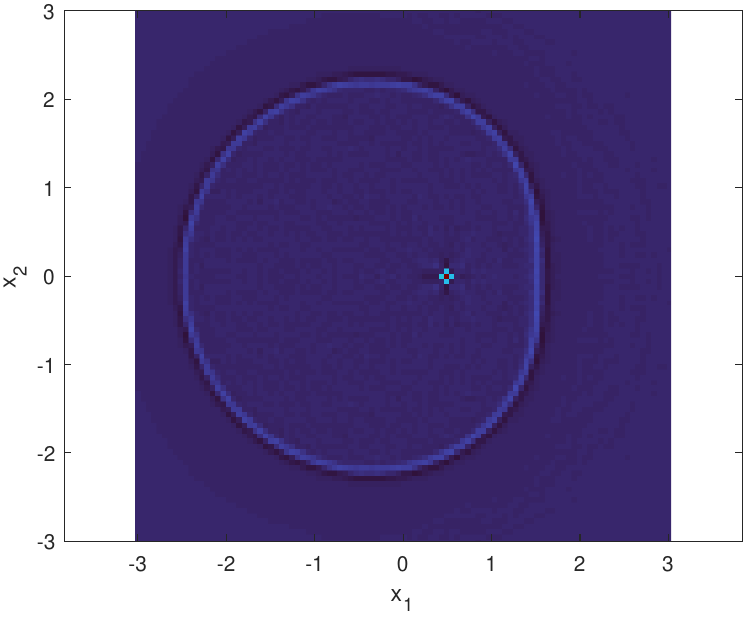}
\subcaption*{spheroid}
\end{subfigure}
\begin{subfigure}{0.24\textwidth}
\includegraphics[width=0.9\linewidth, height=3.2cm, keepaspectratio]{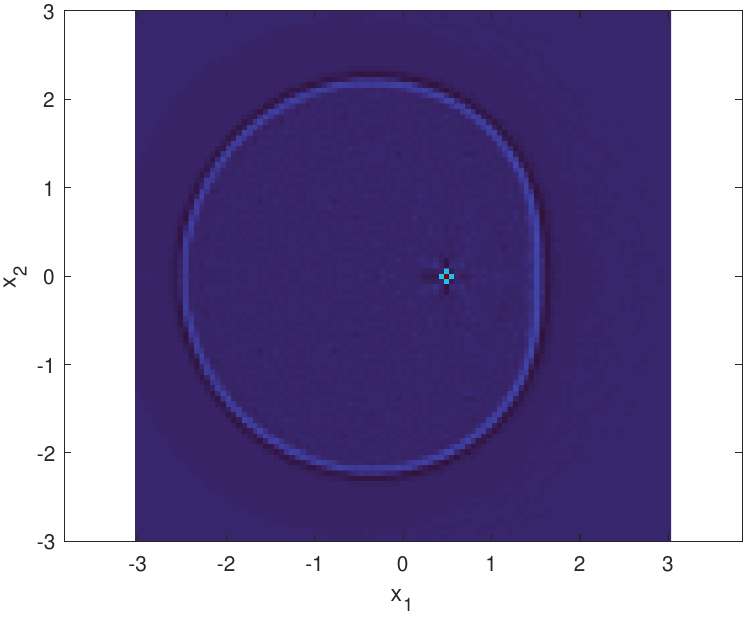}
\subcaption*{lemon}
\end{subfigure}
\caption{Predicted and observed artifacts due to Bolker for a variety of integral surfaces of revolution with centers on a cylinder. Top row - 3-D view. Bottom row - $(x_1,x_2)$ plane cross-sections. $1\%$ Gaussian noise was added in this example.}
\label{F1}
\end{figure}
In the reconstructions, the delta function is reflected in every plane tangent to $S$, which forms a cardioid type curve which is embedded in the $(x_1,x_2)$ plane. This is as predicted by our theory, and the predicted and observed artifact curves match exactly.

\subsection{Phantom reconstructions}
In this sub-section, we present simulated reconstructions of an image phantom. The phantom we consider is a hollow cuboid as pictured in the left-hand column of figure \ref{F2}. In figure \ref{geometry}, we show the size of the hollow cube phantom, and how it fits inside of $S$. We also show example curves which define the surfaces of revolution, in the case of spheres, spheroids, and lemons.
\begin{figure}[!h]
\centering
\begin{subfigure}{0.32\textwidth}
\includegraphics[width=0.9\linewidth, height=4cm, keepaspectratio]{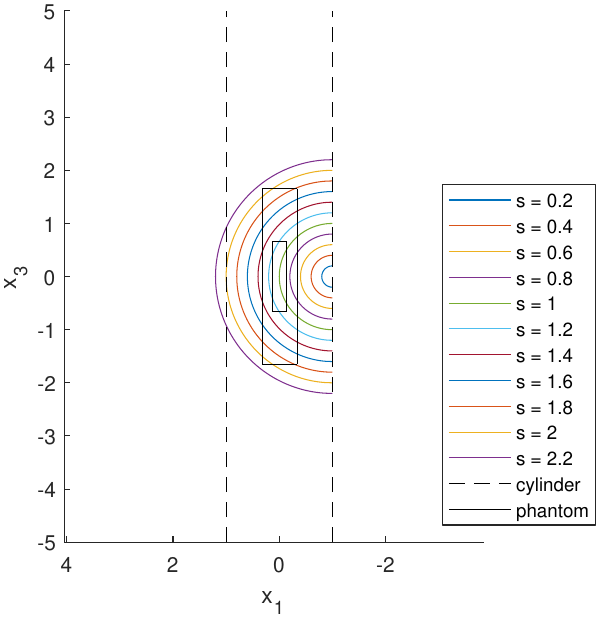}
\subcaption{spheres} \label{F1a}
\end{subfigure}
\begin{subfigure}{0.32\textwidth}
\includegraphics[width=0.9\linewidth, height=4cm, keepaspectratio]{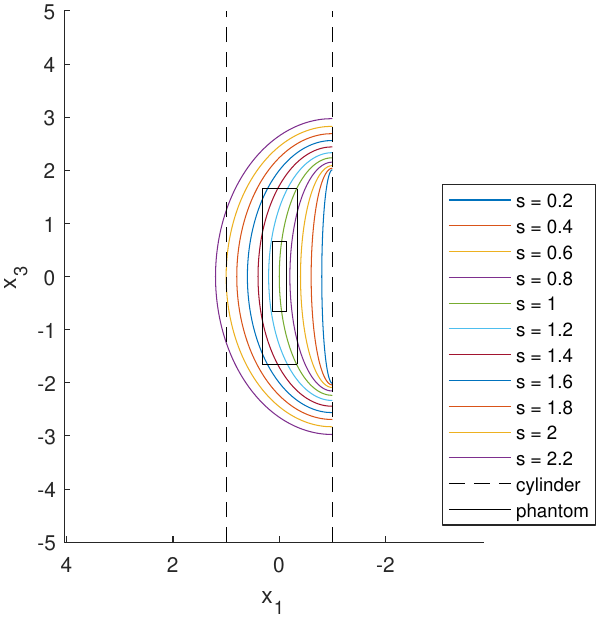} 
\subcaption{spheroids} \label{F1b}
\end{subfigure}
\begin{subfigure}{0.32\textwidth}
\includegraphics[width=0.9\linewidth, height=4cm, keepaspectratio]{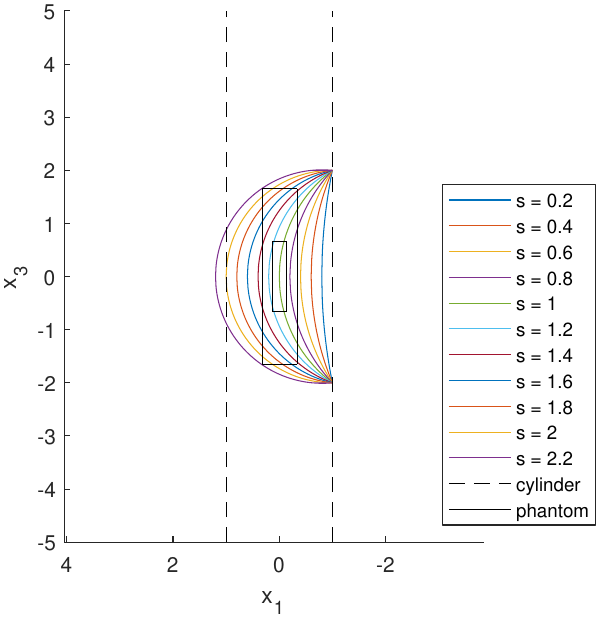}
\subcaption{lemons} \label{F1c}
\end{subfigure}
\caption{Hollow cube phantom and example curves which form the surfaces of revolution in the case of spheres, spheroids, and lemons. The $\mu_j$ and $h$ functions which define the curves shown are provided in section \ref{examples}. The sphere case in (A) is a special case of a spheroid, when $c=0$, where $c$ is the linear eccentricity, as defined in sub-section \ref{example:elliptic}.}
\label{geometry}
\end{figure}

In figure \ref{F_sino}, we show example $R_{\mu}f$ sinograms for sphere, spheroid, and lemon integral surfaces, where $R_{\mu}f$ is generated as specified in sub-section \ref{sim:sect}.
\begin{figure}
\centering
\begin{subfigure}{0.24\textwidth}
\includegraphics[width=0.9\linewidth, height=3.2cm, keepaspectratio]{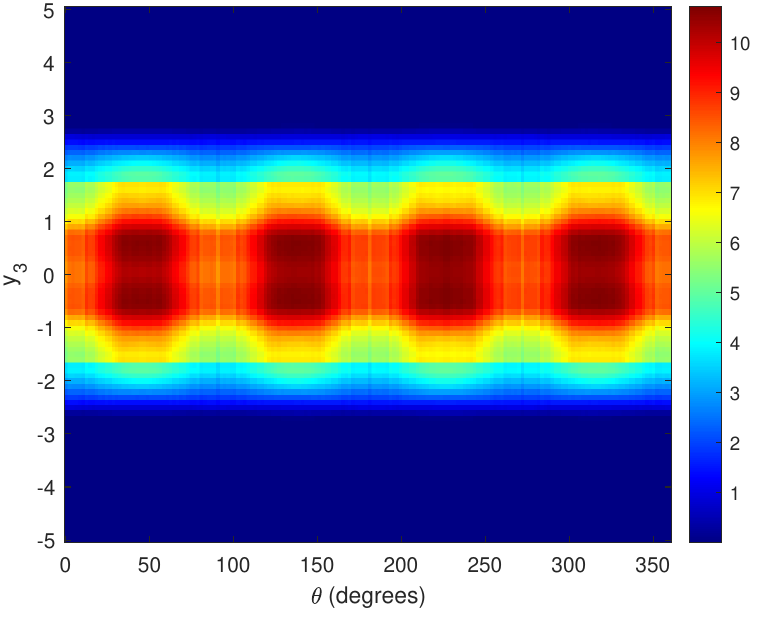}
\end{subfigure}
\begin{subfigure}{0.24\textwidth}
\includegraphics[width=0.9\linewidth, height=3.2cm, keepaspectratio]{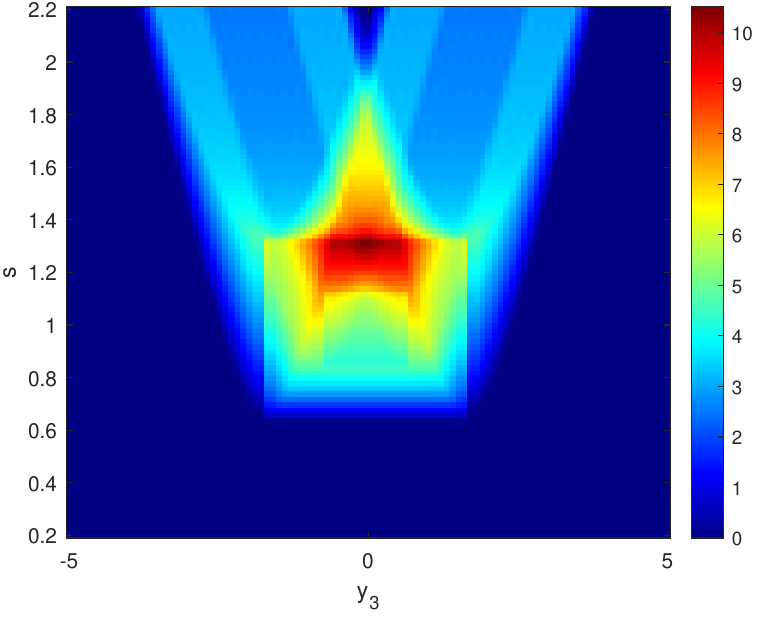}
\end{subfigure}
\begin{subfigure}{0.24\textwidth}
\includegraphics[width=0.9\linewidth, height=3.2cm, keepaspectratio]{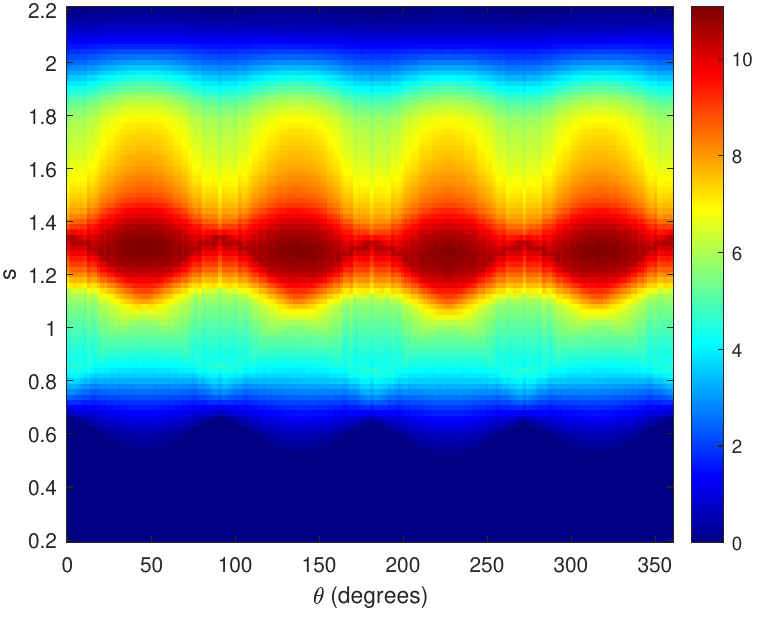}
\end{subfigure}
\\
\begin{subfigure}{0.24\textwidth}
\includegraphics[width=0.9\linewidth, height=3.2cm, keepaspectratio]{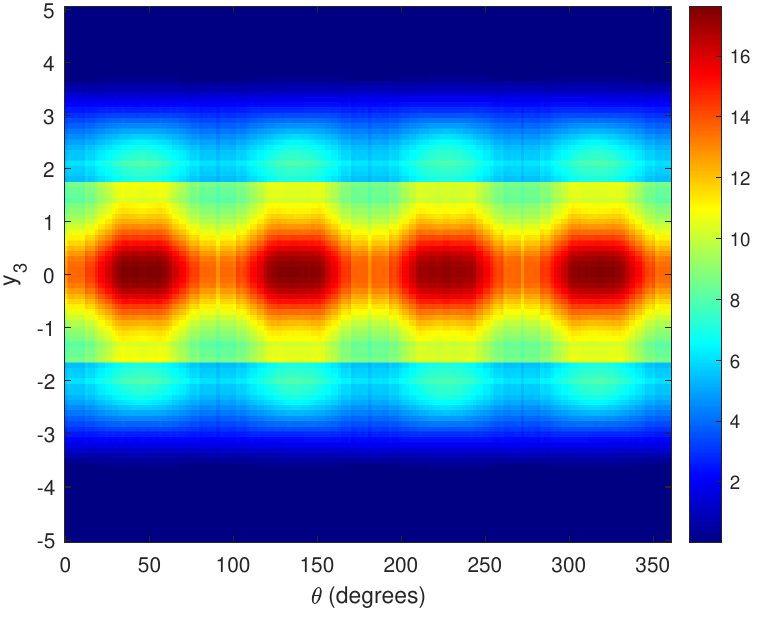}
\end{subfigure}
\begin{subfigure}{0.24\textwidth}
\includegraphics[width=0.9\linewidth, height=3.2cm, keepaspectratio]{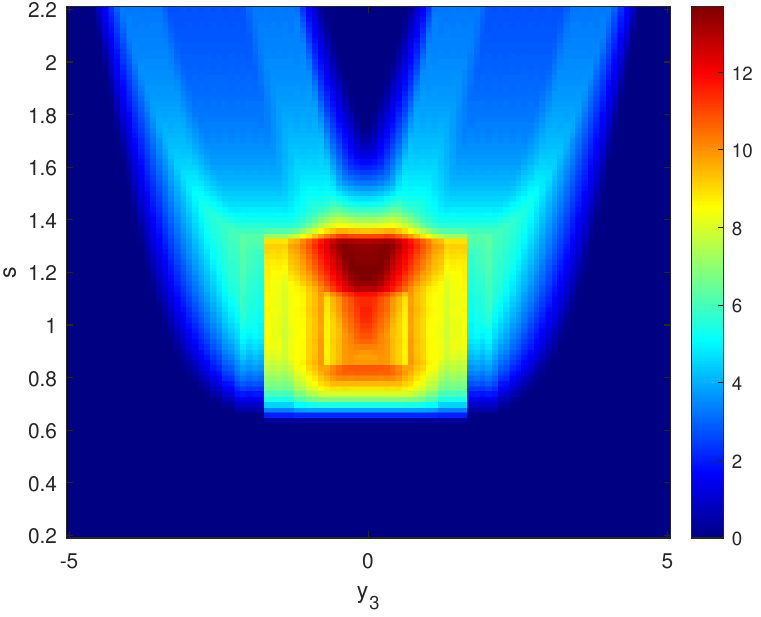}
\end{subfigure}
\begin{subfigure}{0.24\textwidth}
\includegraphics[width=0.9\linewidth, height=3.2cm, keepaspectratio]{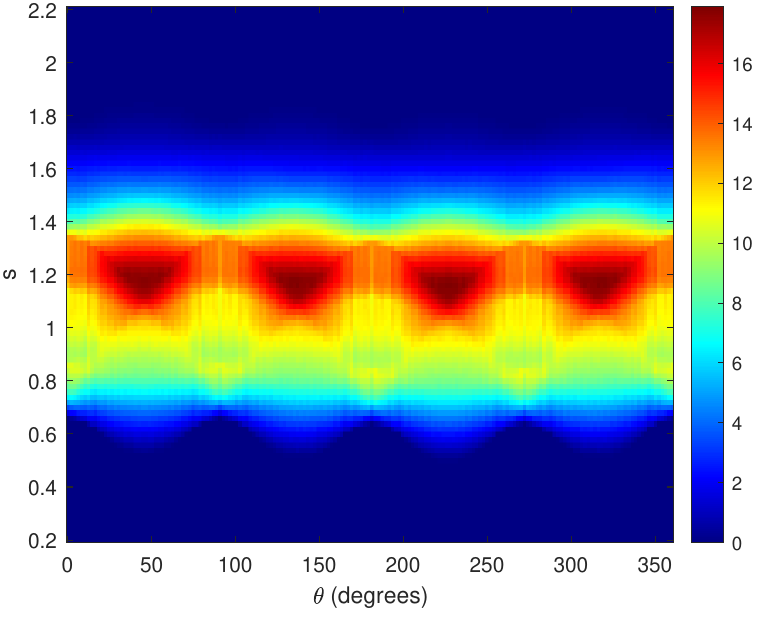}
\end{subfigure}
\\
\begin{subfigure}{0.24\textwidth}
\includegraphics[width=0.9\linewidth, height=3.2cm, keepaspectratio]{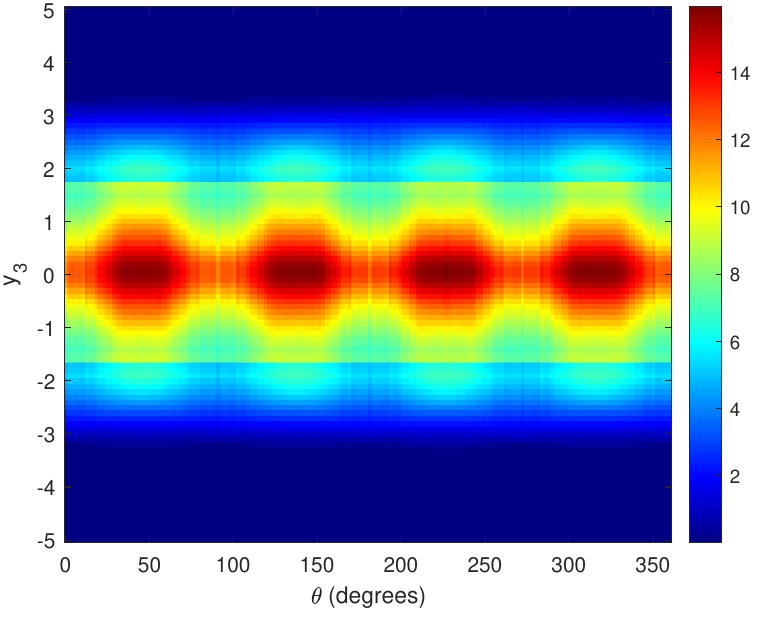}
\subcaption*{$\{s=1.2\}$ plane}
\end{subfigure}
\begin{subfigure}{0.24\textwidth}
\includegraphics[width=0.9\linewidth, height=3.2cm, keepaspectratio]{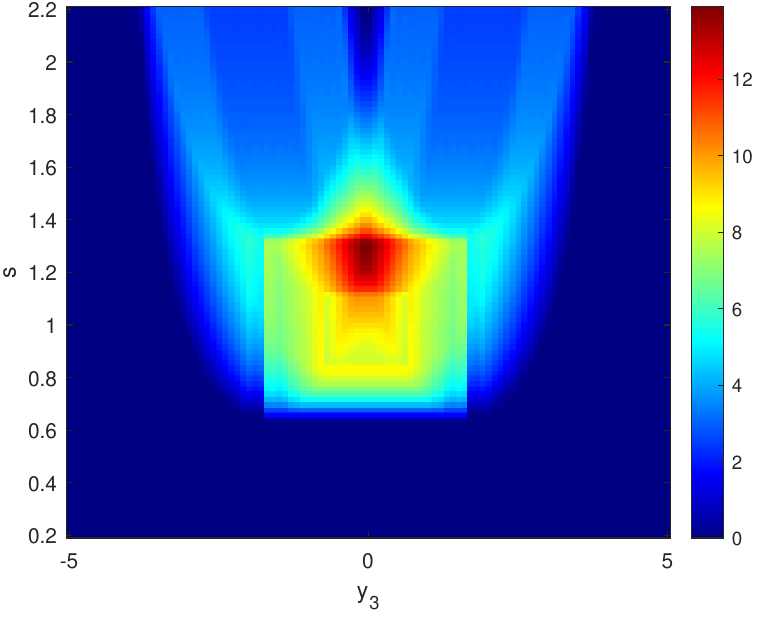}
\subcaption*{$\{\theta = 0\}$ plane}
\end{subfigure}
\begin{subfigure}{0.24\textwidth}
\includegraphics[width=0.9\linewidth, height=3.2cm, keepaspectratio]{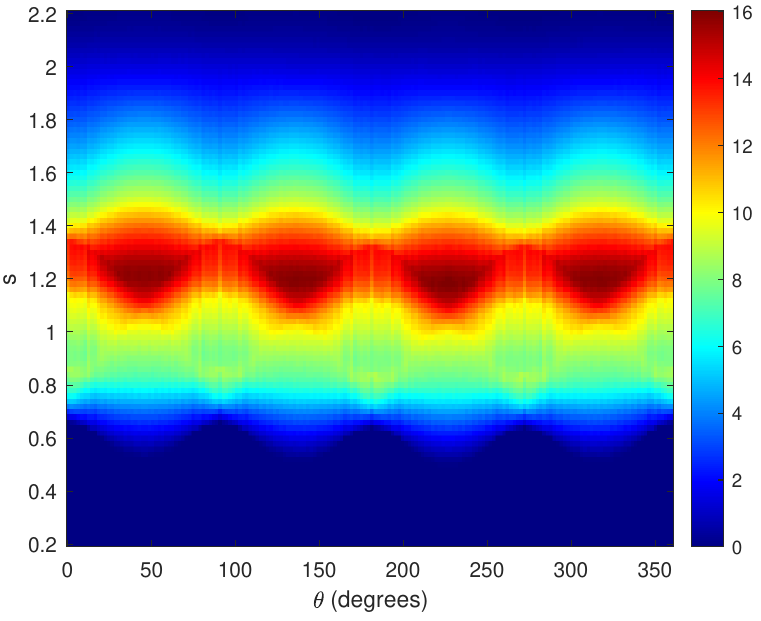}
\subcaption*{$\{y_3 = 0\}$ plane}
\end{subfigure}
\caption{Example $R_{\mu}f$ sinograms when $f$ is a hollow cuboid. Sinograms are generated for three integral surfaces, namely spheres (top row), spheroids (middle row), and lemons (bottom row). We show three cross-sections for each integrals surface, which are specified in the sub-figure caption.}
\label{F_sino}
\end{figure}
The edges of the hollow cuboid phantom can be seen in the $\{\theta = 0\}$ plane cross-sections, although they are smoothed out due to the application of $R_{\mu}$. In the $\{y_3 = 0\}$ plane and $\{s=1.2\}$ plane cross-sections, the sinograms are $\pi/2$  periodic as a function of $\theta$. This is due to the four-fold rotational symmetry of the hollow cuboid about the $x_3$ axis.

\begin{figure}
\centering
\begin{subfigure}{0.24\textwidth}
\includegraphics[width=0.9\linewidth, height=3.2cm, keepaspectratio]{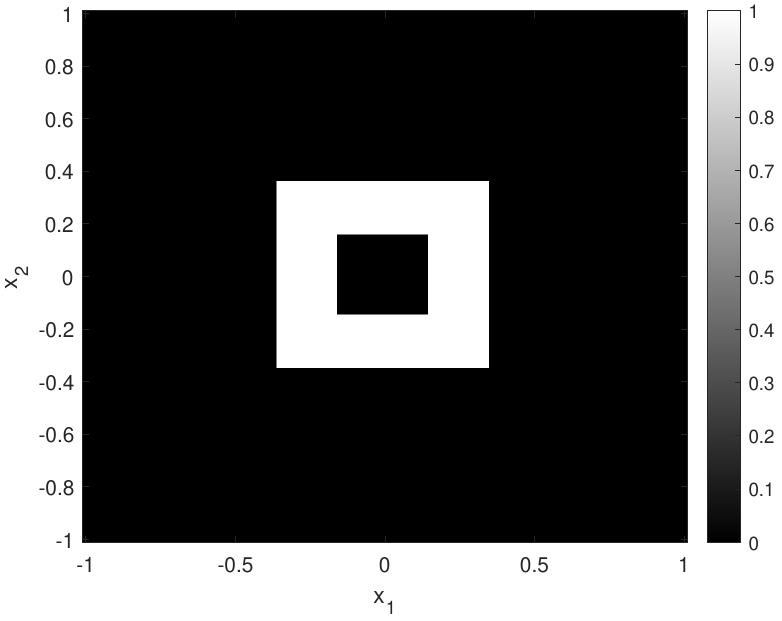}
\end{subfigure}
\begin{subfigure}{0.24\textwidth}
\includegraphics[width=0.9\linewidth, height=3.2cm, keepaspectratio]{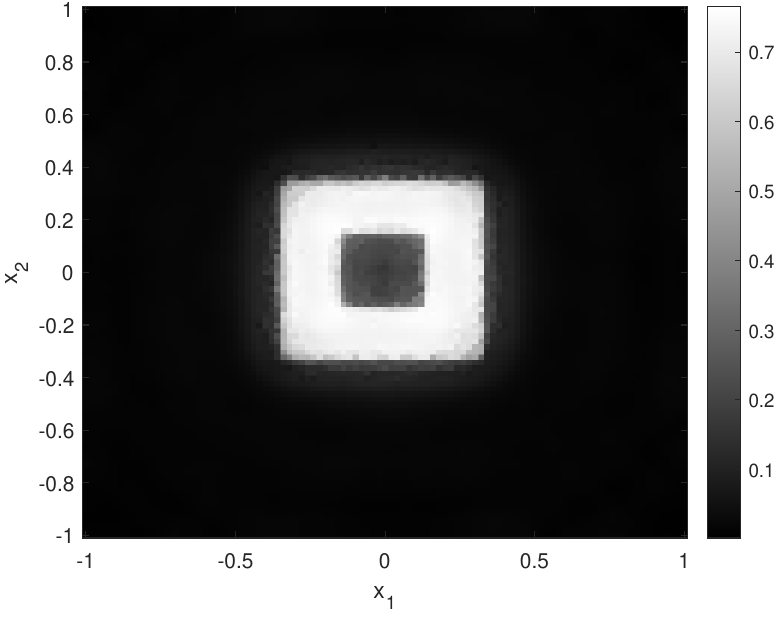}
\end{subfigure}
\begin{subfigure}{0.24\textwidth}
\includegraphics[width=0.9\linewidth, height=3.2cm, keepaspectratio]{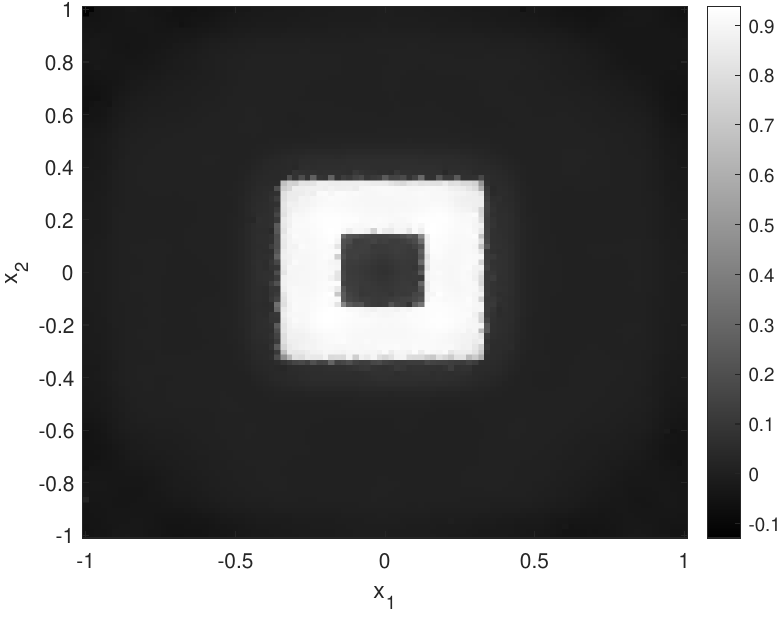}
\end{subfigure}
\begin{subfigure}{0.24\textwidth}
\includegraphics[width=0.9\linewidth, height=3.2cm, keepaspectratio]{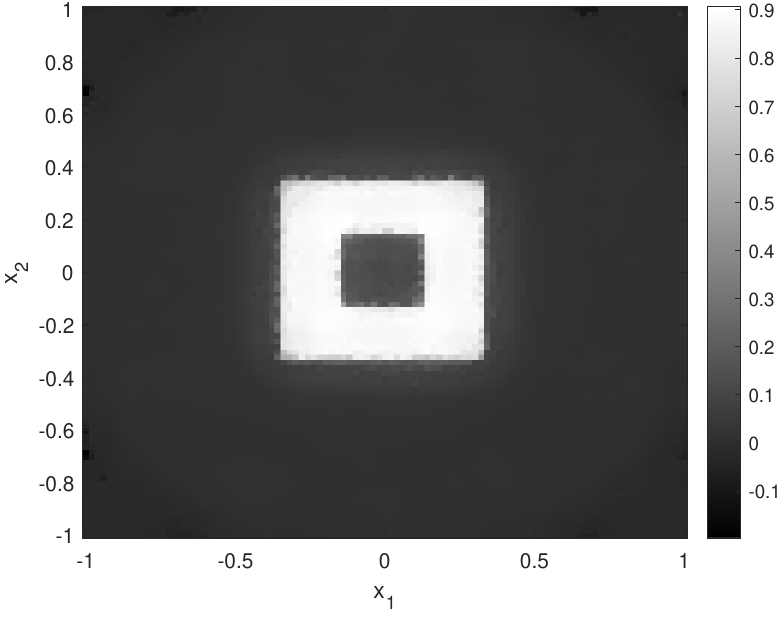}
\end{subfigure}
\begin{subfigure}{0.24\textwidth}
\includegraphics[width=0.9\linewidth, height=3.2cm, keepaspectratio]{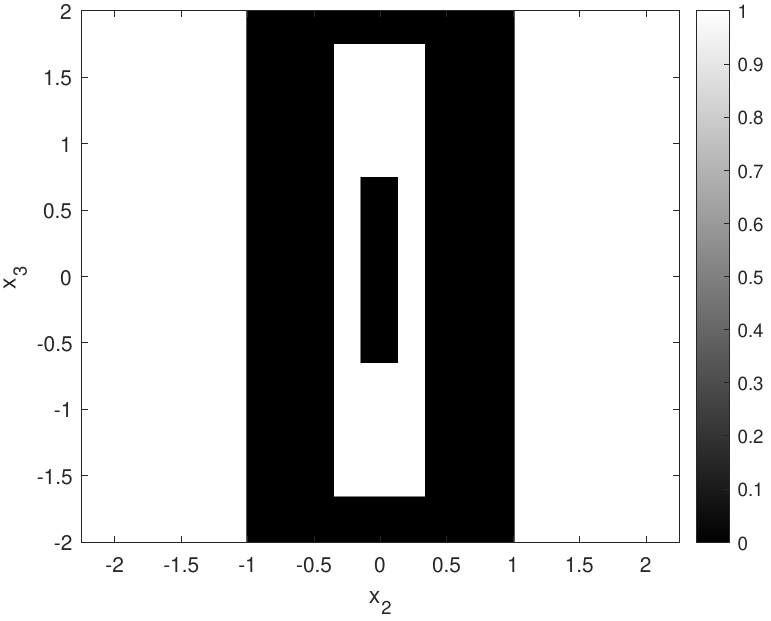}
\end{subfigure}
\begin{subfigure}{0.24\textwidth}
\includegraphics[width=0.9\linewidth, height=3.2cm, keepaspectratio]{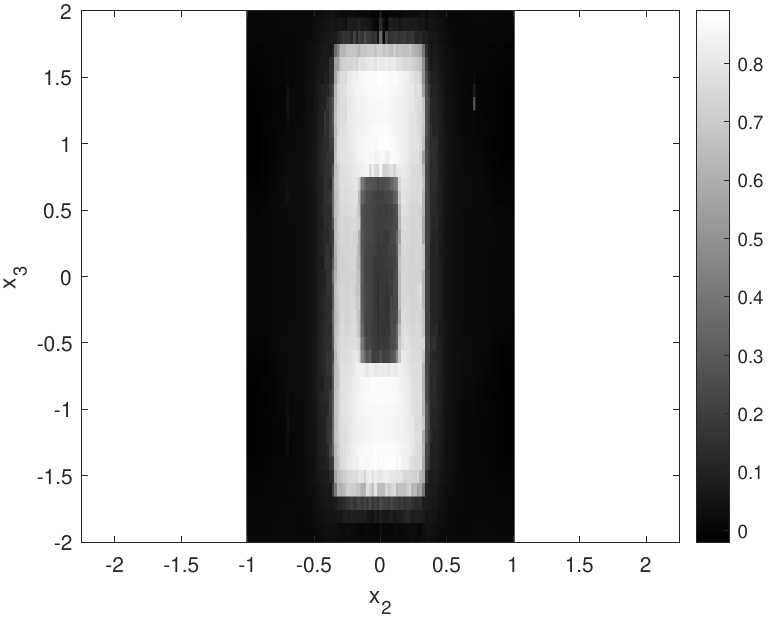}
\end{subfigure}
\begin{subfigure}{0.24\textwidth}
\includegraphics[width=0.9\linewidth, height=3.2cm, keepaspectratio]{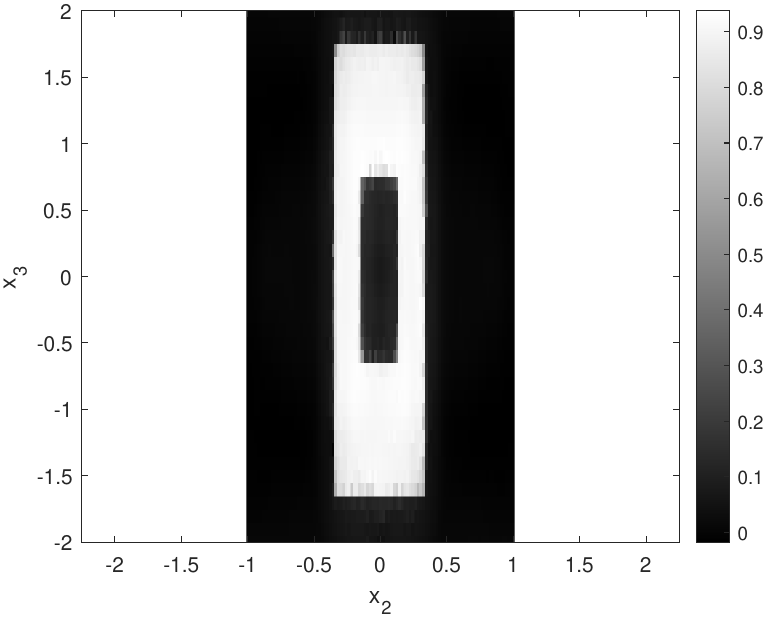}
\end{subfigure}
\begin{subfigure}{0.24\textwidth}
\includegraphics[width=0.9\linewidth, height=3.2cm, keepaspectratio]{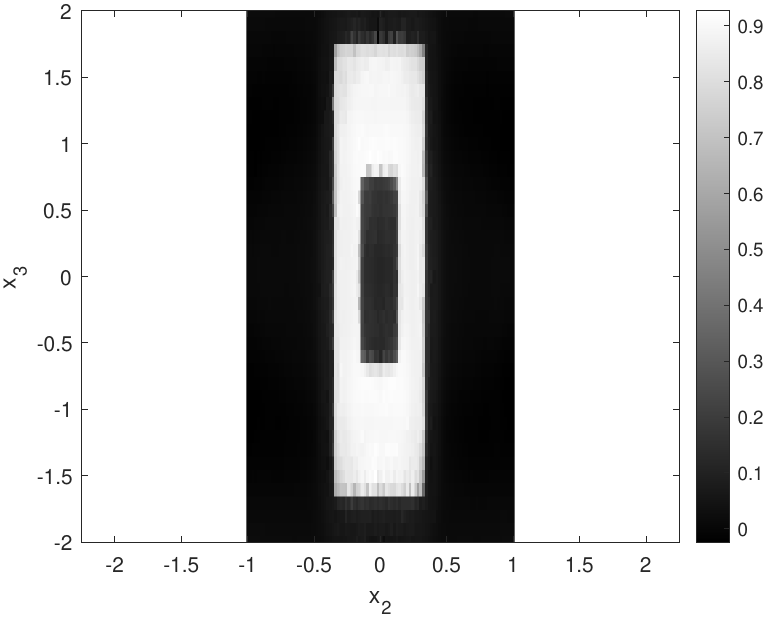}
\end{subfigure}
\begin{subfigure}{0.24\textwidth}
\includegraphics[width=0.9\linewidth, height=3.2cm, keepaspectratio]{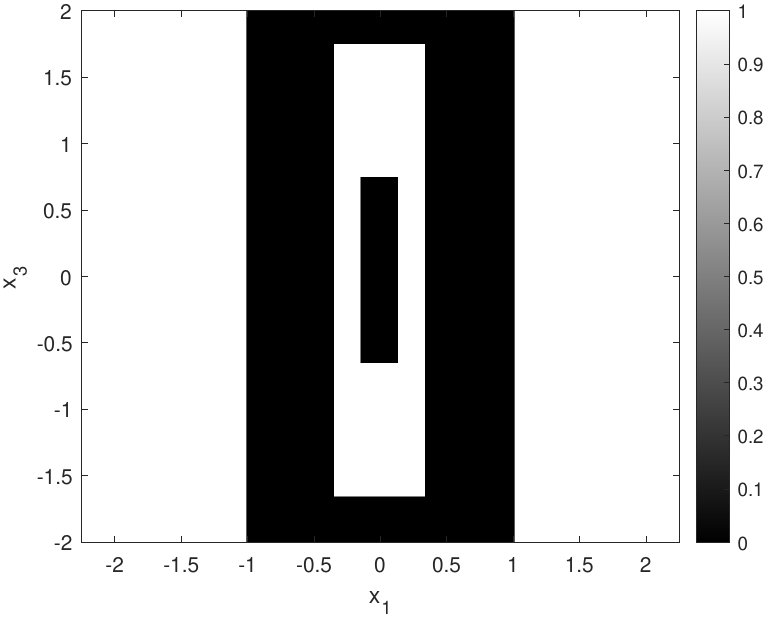}
\subcaption*{ground truth}
\end{subfigure}
\begin{subfigure}{0.24\textwidth}
\includegraphics[width=0.9\linewidth, height=3.2cm, keepaspectratio]{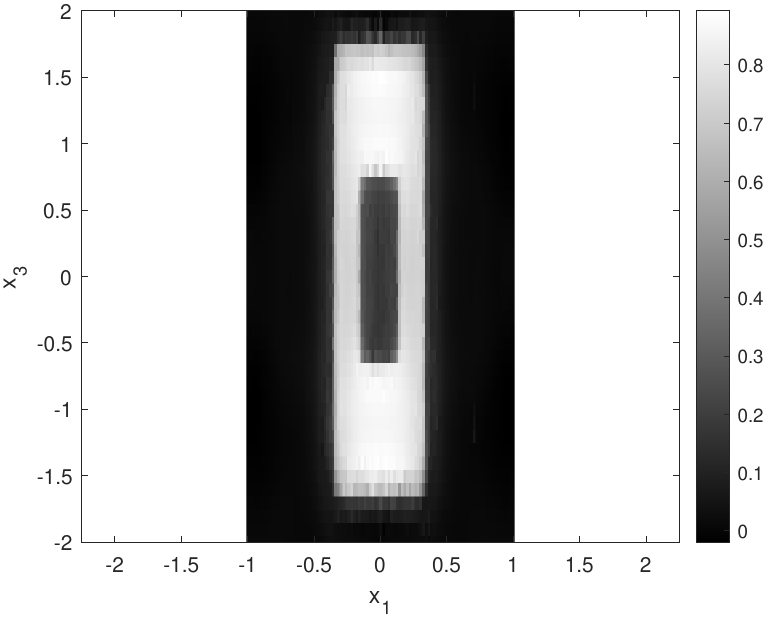}
\subcaption*{spheres}
\end{subfigure}
\begin{subfigure}{0.24\textwidth}
\includegraphics[width=0.9\linewidth, height=3.2cm, keepaspectratio]{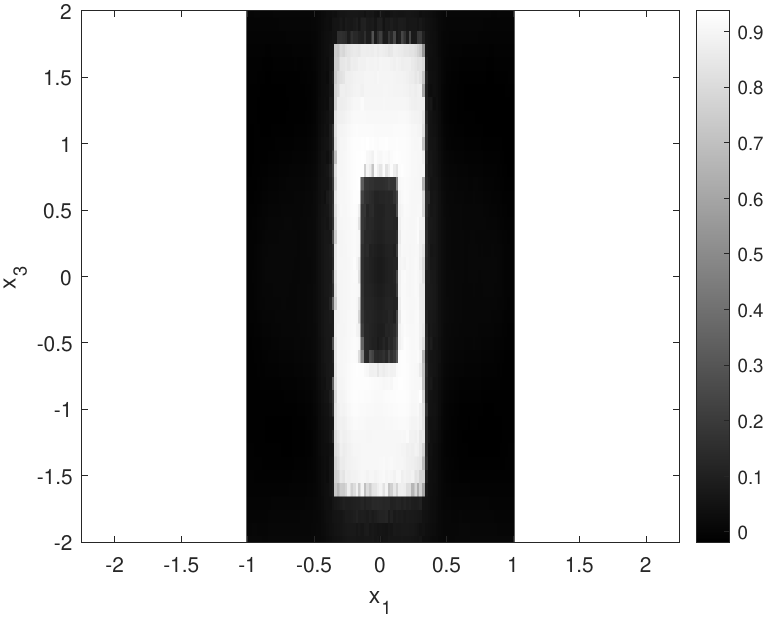}
\subcaption*{spheroids}
\end{subfigure}
\begin{subfigure}{0.24\textwidth}
\includegraphics[width=0.9\linewidth, height=3.2cm, keepaspectratio]{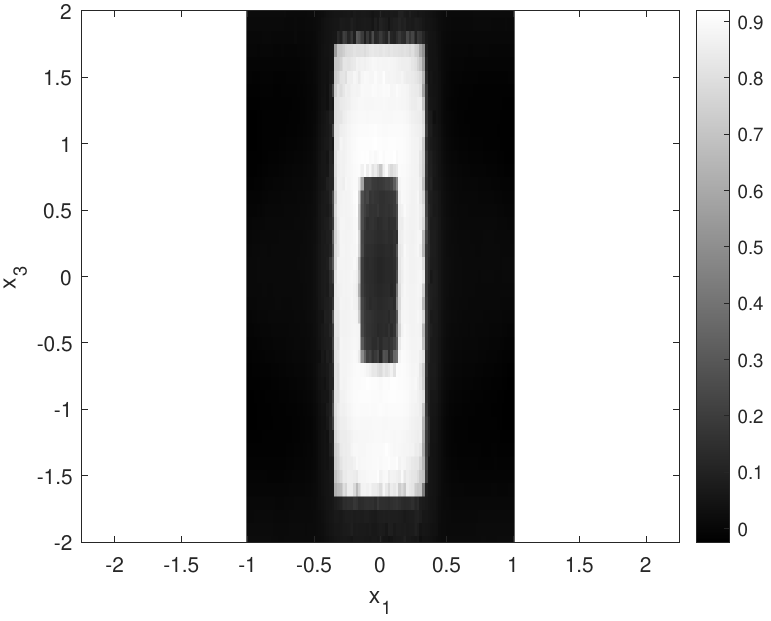}
\subcaption*{lemons}
\end{subfigure}
\caption{Reconstructions of hollow cuboid with $5\%$ added Gaussian noise. Top row - $(x_1,x_2)$ plane. Middle row - $(x_2,x_3)$ plane. Bottom row - $(x_1,x_3)$ plane.}
\label{F2}
\end{figure}

See figure \ref{F2}, where we have presented reconstructions of the hollow cube phantom from the sinogram data in figure \ref{F_sino}, with $5\%$ added Gaussian noise, and see figure \ref{errors} where we plot the relative least-squares reconstruction errors for varying levels of added Gaussian noise. The Tikhonov and TV smoothing parameters used for each noise level are given on the $x$ axis of figure \ref{errors}. To reconstruct the hollow cube phantom, we implemented the GCLS-TV algorithm as described in sub-section \ref{inv:methods}. The spheroid data reconstructions appear sharpest overall, when compared to the sphere and lemon reconstructions, and this is particularly noticeable in the $(x_2,x_3)$ and $(x_1,x_3)$ cross-sections. The sphere reconstructions are the most blurred, and offer the lowest image quality. This is verified by the reconstruction error plots in figure \ref{errors}, as the sphere integral reconstructions are shown to have the greatest error, when compared to lemon and spheroid integrals, and the spheroid reconstructions have the least error, particularly at higher noise levels (i.e., when the added noise is greater than or equal to $5\%$).
\begin{figure}[!h]
\centering
\begin{subfigure}{0.43\textwidth}
\includegraphics[width=1.2\linewidth, height=4.5cm, keepaspectratio]{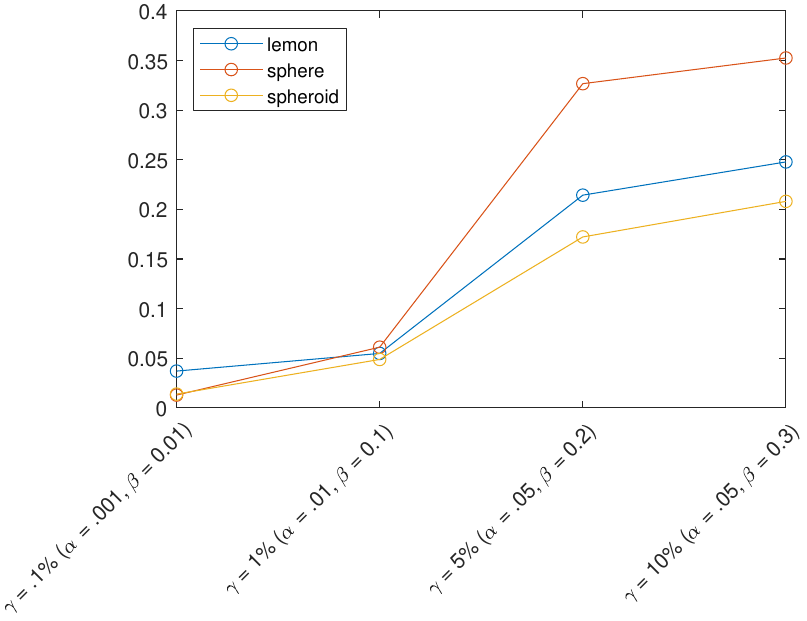} 
\subcaption{least-squares errors} \label{errors}
\end{subfigure}
\begin{subfigure}{0.43\textwidth}
\includegraphics[width=1.2\linewidth, height=4.5cm, keepaspectratio]{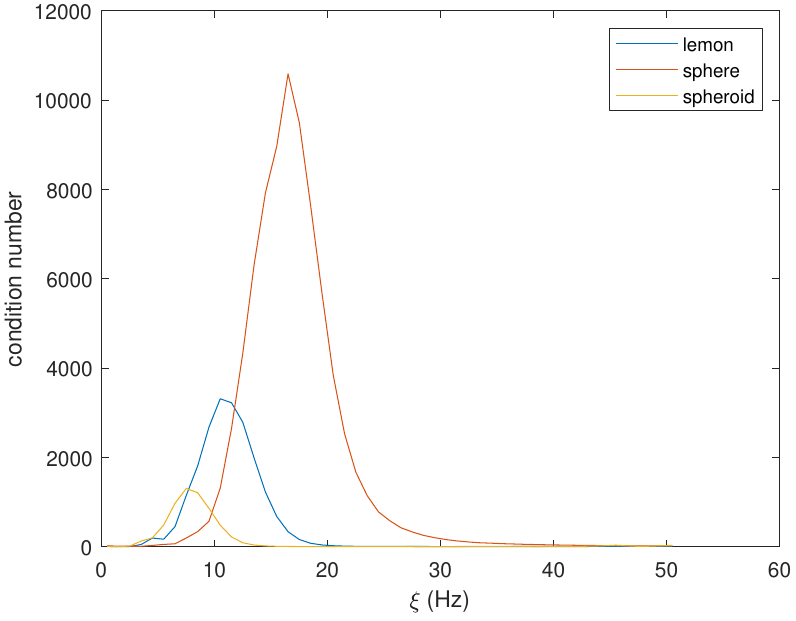}
\subcaption{condition numbers} \label{conds}
\end{subfigure}
\caption{(A) - least-squares error curves for varying noise levels ($\gamma$), corresponding to each integral surface considered. The Tikhonov and TV smoothing parameters, $\alpha$ and $\beta$, respectively, corresponding to each $\gamma$, are given on the $x$ axis in (A). (B) - condition number plots of the $V_{\xi}$ operators, for varying $\xi$ values, and for different integral surfaces of revolution.}
\label{F3}
\end{figure}

The sphere, spheroid, and lemon surfaces are defined using different $\mu_1$, which changes the $V_{\xi}$ operators in \eqref{sim}. The remaining operations in \eqref{sim}, e.g., $M$, remain constant, and do not vary with the integral surface. Thus, the differences in stability of inversion of $R_{\mu}$, for different integral surfaces, can be quantified through analysis of the $V_{\xi}$. To investigate this, we plot the condition numbers of the $V_{\xi}$ for varying $\xi$, and for sphere, spheroid, and lemon integral surfaces. See figure \ref{conds}. The largest peak in condition number occurs in the sphere curve, and the area under the sphere condition number curve is the largest. The spheroid condition numbers have the smallest peak, and area under the curve. Thus, the $V_{\xi}$ operators exhibit the greatest inversion instability in the sphere case, when compared to spheroids or lemons, which corresponds to greater noise amplification in the image reconstructions, and this is verified by figure \ref{errors}. More generally, we notice that the area under the condition number curve correlates positively with the area under the error curves in figure \ref{errors}, which is to be expected as the condition number bounds the least squares error. This suggests that, in certain geometries (such as considered here), reconstructing $f$ from spheroid data is preferred when compared to sphere data, in terms of inversion stability. This could have important implications in an application such as URT. In URT, the foci of the spheroid correspond to sound wave emitters and receivers. When the linear eccentricity, $c=0$, the spheroids reduce to spheres, and the sound waves are emitted and received at the same point. In this example, setting $c=2$ has advantages over the $c=0$ case in terms of inversion stability. This has implications regarding scanner design in URT, e.g., we could compare the area under the condition number curves for a range of $c$ to determine the optimal $c$, which, in the context of URT, is the distance between the emitter and receiver. This is an idea which warrants further investigation, which we aim to address in future work.

\section{Conclusion}
In this paper, we presented microlocal and injectivity analyses of a novel Radon transform, $R$, which defines the integrals of a function, $f$, over surfaces of revolution with centers on generalized surfaces $S = Q\times \mathbb{R}$ in $\mathbb{R}^n$, where $Q \subset \mathbb{R}^{n-2}$ is a smooth embedded hypersurface. In Theorem \ref{thm:bolker}, we analyzed $R$ as an FIO, and provided conditions on $h$ and $S$ which are necessary and sufficient for the Bolker condition to hold. The conditions on $h$ involve the first and second order derivatives of $h$, which can be calculated and verified simply for many examples of interest, e.g., in URT, when $h$ defines a semicircle or elliptic arc. The conditions on $S$ are geometric and require checking if planes tangent to $S$ intersect the support of $f$. In section \ref{inversion}, the surface of revolution transforms were shown to be closely related to the spherical Radon transform, and, using this idea, we proved injectivity results in Theorem \ref{thm:inversion}. 

In section \ref{images}, we presented condition number plots of the Volterra operators, $V_{\xi}$, which were used in the proof of Theorem \ref{thm:inversion}. The area under the condition number curve was shown to correlate positively with the least-squares reconstruction error. Interestingly, the condition number plots had an approximate bell shape, and peaked at intermediate frequency values, $\xi$. In further work, we aim to investigate in more detail how the location and size of the condition number peak relates to the Volterra operators in \eqref{volt_0}, and if we can predict the peak location and magnitude analytically.

\section*{Acknowledgements:} 
The first author wishes
to acknowledge funding support from Brigham Ovarian Cancer Research Fund, The V Foundation, Abcam Inc., and Aspira Women's Health.
Sean Holman was supported by the Engineering and Physical Sciences Research Council (EPSRC) grant number EP/V007742/1. The third author's research was partially supported
by Simons grant 708556. The authors would like to thank the Isaac Newton Institute for Mathematical Sciences, Cambridge, for support and hospitality during the programme Rich and Nonlinear Tomography where work on this paper was undertaken. This programme was supported by EPSRC grant number EP/R014604/1.

\bibliographystyle{abbrv} 
\bibliography{RefRevolution}

\end{document}